\theoremstyle{plain}
\newtheorem{theorem}{Theorem} 
\newtheorem{lemma}[theorem]{Lemma}
\newtheorem{proposition}[theorem]{Proposition}
\newtheorem{corollary}[theorem]{Corollary}
\theoremstyle{definition}
\newtheorem{algorithm}[theorem]{Algorithm}
\theoremstyle{remark}
\newtheorem{remark}[theorem]{Remark}
\providecommand{\keywords}[1]{
  \small	
  \textbf{\textit{Keywords---}} #1}
\providecommand{\msc}[1]{
  \small	
  \textbf{\textit{2020 AMS Mathematics Subject Classification ---}} #1}
\DeclareMathOperator*{\argmin}{arg\,min}
\DeclareMathOperator{\proj}{proj}
\DeclareMathOperator{\rank}{rk}
\newcommand{\I}{\mathrm i}
\newcommand{\J}{\mathrm j}
\newcommand{\K}{\mathrm k}
\newcommand{\R}{\mathbb R}
\newcommand{\C}{\mathbb C}
\newcommand{\HH}{\mathbb H}
\newcommand{\Fun}[1]{\mathcal{#1}}
\newcommand{\FJ}{\Fun J}
\newcommand{\FK}{\Fun K}
\newcommand{\FG}{\Fun G}
\newcommand{\FQ}{\Fun Q}
\newcommand{\Mat}[1]{\bm{#1}}
\newcommand{\MA}{\Mat A}
\newcommand{\MB}{\Mat B}
\newcommand{\MC}{\Mat C}
\newcommand{\MI}{\Mat I}
\newcommand{\MSigma}{\Mat \Sigma}
\newcommand{\MM}{\Mat M}
\newcommand{\MP}{\Mat P}
\newcommand{\MQ}{\Mat Q}
\newcommand{\MS}{\Mat S}
\newcommand{\MR}{\Mat R}
\newcommand{\MU}{\Mat U}
\newcommand{\MV}{\Mat V}
\newcommand{\MW}{\Mat W}
\newcommand{\MX}{\Mat X}
\newcommand{\MY}{\Mat Y}
\newcommand{\MZ}{\Mat Z}
\newcommand{\Mzero}{\Mat 0}
\newcommand{\Mcomplex}{\MM}
\newcommand{\Mquater}{\MM}
\newcommand{\Vek}[1]{\bm{#1}}
\newcommand{\Vx}{\Vek x}
\newcommand{\Vy}{\Vek y}
\newcommand{\Vz}{\Vek z}
\newcommand{\Vw}{\Vek w}
\newcommand{\Vv}{\Vek v}
\newcommand{\Vr}{\Vek r}
\newcommand{\Vq}{\Vek q}
\newcommand{\Vell}{\Vek \ell}
\newcommand{\sphere}{\mathbb S}
\newcommand{\Csphere}{\sphere_{\C}}
\newcommand{\Hsphere}{\sphere_{\HH}}
\newcommand{\SO}{{\mathrm{SO}}}
\newcommand{\cC}{\mathcal C}
\newcommand{\tT}{\mathrm{T}}
\newcommand{\tF}{\mathrm{F}}
\newcommand{\tH}{\mathrm{H}}
\begin{document}

\title{Denoising of Sphere- and SO(3)-Valued Data 
by Relaxed Tikhonov Regularization}

\author{Robert Beinert\thanks{R. Beinert is with the Institute of Mathematics,
	Technische Universit\"at Berlin, Stra\ss{}e des 17. Juni 136,
        10623 Berlin, Germany.}
        \and
    Jonas Bresch\thanks{J. Bresch is with the Institute of Mathematics,
	Technische Universit\"at Berlin, Stra\ss{}e des 17. Juni 136,
        10623 Berlin, Germany.}
        \and 
    Gabriele Steidl\thanks{G. Steidl is with the Institute of Mathematics,
	Technische Universit\"at Berlin, Stra\ss{}e des 17. Juni 136,
        10623 Berlin, Germany.}%
}

\maketitle

\begin{abstract}

Manifold-valued signal- and image processing has
received attention due to modern image acquisition techniques.
Recently,
a convex relaxation 
of the otherwise nonconvex Tikhonov-regularization
for denoising circle-valued data 
has been proposed by Condat (2022).
The circle constraints are here encoded 
in a series of low-dimensional, positive semi-definite matrices.
Using Schur complement arguments,
we show 
that the resulting variational model can be simplified
while leading to the same solution.
The simplified model can be generalized to higher
dimensional spheres and to $\SO(3)$-valued data, 
where we rely on the quaternion representation of the latter.
Standard algorithms from convex analysis can be applied to solve
the resulting convex minimization problem.
As proof-of-the-concept, we use the alternating direction method of multipliers 
to demonstrate the denoising behavior of the proposed method.
In a series of experiments, 
we demonstrate the numerical convergence 
of the signal- or image values to the underlying manifold.
\end{abstract}

\keywords{Denoising of manifold-valued data,
    sphere- and $\SO(3)$-valued data,
    signal and image processing on graphs,
    Tikhonov regularization, 
    convex relaxation.}

\hspace{0.25cm}

\msc{94A08, 94A12, 65J22, 90C22, 90C25}

\section{Introduction}

With the emergence of modern acquisition techniques 
which produce manifold-valued signals and images, 
research in this field focuses new challenges.  
Circle-valued data appear in interferometric synthetic aperture radar \cite{BRFa2000,DDT2011,RHJL2000}, 
color image restoration in HSV or LCh spaces \cite{NikSte14}, 
Magnetic Resonance Imaging \cite{LEHS2008}, 
biology \cite{SRL2005}, psychology \cite{CK2018}
and various other applications involving
the phase of Fourier transformed data for instance. 
Signals and images with values on the 2-sphere
play a role when dealing with 3d directional information \cite{ASWK1993,KS2002,VO2002} 
or in the processing of color images 
in the chromaticity-brightness setting \cite{BB2018,PPS2017,QKL2010}. 
The rotation group $\SO(3)$ was considered in tracking and motion analysis 
and in the analysis of electron back-scatter diffraction data (EBSD) \cite{BHS11,BHJPSW10,GNHSLP2022}.
Quaternion-valued data also arises 
in color image restoration,
concerning the hue, saturation, and value color space \cite{ZMWW19}.

The denoising of circle- and sphere-valued data was addressed by several methods
as lifting procedures \cite{CS13,SC11,LSKC13}, 
variational approaches with total variation-like (TV-like) regularizers \cite{BBSW16,WDS14} 
and half-quadratic minimization models
like the iteratively re-weighted least squares method \cite{BerChaHiePerSte16,GS14}.
These methods either enlarge the dimension of the problem drastically,
especially for lifting methods,
have restricted convergence guarantees, 
or rely on data on Hadamard manifolds, which spheres are not.
There are few stochastic approaches in manifold-valued data processing 
as the work of Pennec's group, see for instance \cite{pennec2006,LNPS2017}, 
where a patch-based approach
via a minimum mean square estimation (MMSE) model 
on the respective manifold is applied. 
For an overview, we also refer to \cite{BerLauPerSte19}.
For denoising matrix-manifolds and matrix-valued images,
which become a mayor part of computer vision, image processing and motion analysis,
the rotation group SO($n$),
the special-euclidean group SE($n$)
and the symmetric positive definite matrices SPD($n$)
are of mayor interest.
For instance, 
\cite{RTKB14,RBBTK12} consider a TV regularizer on the different Lie-groups
for rigid 3D motions,
minimizing the irregularity of the motion field.

In this paper, we focus exclusively on 
the important simple variational model with quadratic Tikhonov regularization
that penalizes the squared differences of
adjacent values on a graph, 
which promotes the property that such values are close
to each other. 
Unfortunately, due to the manifold constrained, 
this is still a difficult nonconvex problem.
Recalling the convexity relaxed model of Condat \cite{condat_1D2D} 
for circle-valued data,
we show how this relaxed model can be nicely simplified 
without losing any information.
Based on Schur complement arguments, 
we will see that our model leads to the same minimizers
while reducing the number of parameters 
and consequently making the computation more efficient.
In a preprint \cite{condat_3D}, which came without any numerical results, 
Condat suggested to generalize his model to data on the 2-sphere.
In this paper, we will see that this approach can be simplified as well.
Moreover, it just appears as a special case of quaternion-valued data processing.
The latter one also leads to a relaxed convex model for $\SO(3)$-valued data.
In the proceeding \cite{BeiBre2024},
we introduce a similar approach 
for anisotropic TV. 
Since the TV functional has another structure 
than the quadratic Tikhonov regularizer,
we end up with a completely different convexification.

Outline of the paper:
in Section~\ref{sec:tikh}, we reconsider Condat's relaxation 
of the nonconvex Tikhonov model for denoising circle-valued data.
We address the model from two different points of view depending 
whether we embed 
the circle into the complex numbers $\C$ 
or into the real numbers $\R^2$.
Both viewpoints make it easier to understand the later extension 
of the approach to the quaternion setting.
Then, we introduce our simplified model and prove 
that this model has the same solutions as the original relaxed formulation
by using Schur complement arguments.
In Section~\ref{sec:sd}, we will see that our simplified model 
can be generalized to higher dimensional spheres
in a straightforward way.
Next, in Section~\ref{sec:sdc}, we explain how the nonconvex Tikhonov
model as well as its relaxation 
and simplification can be generalized to $\SO(3)$-valued data.
The approach relies on the parameterization of $\SO(3)$ by quaternions.
Remarkably, it also includes 2-sphere-valued data.
We describe the ADMM for finding a minimizer of our relaxed models in Section~\ref{sec:algs}. 
Finally, Section~\ref{sec:numerics}
provides some proof-of-the-concept denoising examples 
for \mbox{circle-,} sphere- and $\SO(3)$-valued data.
Although we cannot prove the tightness of our relaxation,
we observe that the calculated numerical solutions
solve the original nonconvex problem,
i.e.\ the denoised signals are again 
\mbox{circle-,} sphere- or $\SO(3)$-valued.
In case of circle-valued data,
we compare our simplified approach
with the original relaxation in \cite{condat_1D2D},
where we observe significant improvements 
concerning the computational complexity
and numerical convergence.
Additionally,
we compare our approach with the TV denoiser from \cite{BerLauSteWei14},
which is based on geodesic distances,
where we observe significant improvements 
concerning the computation time and restoration error.

%-------------------------------------------------------------
\section{Denoising of Circle-Valued Data}   \label{sec:tikh}
%-------------------------------------------------------------

Let $G = (V, E)$ be a connected, undirected graph,
where $V \coloneqq \{1, \dots, N \}$ denotes the set of vertices,
and $E \subset \{ (n,m) \in V \times V : n < m\}$ the set of edges.
The cardinality of $E$ is henceforth denoted by $M \coloneqq \lvert E \rvert$.
Our aim is to denoise a disturbed circle-valued signal on $G$.
The circle can be either embedded 
into the complex numbers $\C$ 
or into the two-dimensional Euclidean space $\R^2$.
The former approach has been especially considered by Condat \cite{condat_1D2D}
and is summarized in Section~\ref{sec:C_val_mod}.
The latter approach proposed by us 
is studied in Section~\ref{sec:R2_simp_val_mod} and
can be immediately generalized
to tackle sphere-valued and $\SO(3)$-valued data,
see Section~\ref{sec:sd} and \ref{sec:sdc}.
To show that the complex- and real-valued model
are in fact equivalent,
we introduce an in-between model in Section~\ref{sec:R2_val_mod}
interpreting the complex-valued approach
itself as real-valued problem.

%-----------------------------------------------------------------------
\subsection{$\C$-Valued Model}
\label{sec:C_val_mod}
%-----------------------------------------------------------------------

Let $\Csphere \coloneqq \{ z \in \C : \lvert z \rvert = 1\}$
denote the (complex-valued) unit circle.
Our aim is to recover an $\Csphere$-valued signal 
$x \coloneqq (x_n)_{n \in V} \in \Csphere^{N}$ on $G$
from noisy measurements 
$y \coloneqq (y_n)_{n \in V} \in {\Csphere}^N$. Note that the following considerations work also for $y \in {\C}^N$.
A straightforward approach would be to search for the minimizer of the 
Tikhonov-like regularized functional
\begin{linenomath*}
\begin{equation}     \label{eq:class-tik}
    \argmin_{x \in \Csphere^N} 
    \sum_{n \in V} 
    \frac{w_n}{2} \, \lvert x_n - y_n \rvert^2
    + \smashoperator{\sum_{(n,m) \in E}} 
    \frac{\lambda_{(n,m)}}{2} \, \lvert x_n - x_m \rvert^2, 
\end{equation}
\end{linenomath*}
where $w \coloneqq (w_n)_{n \in V} \in \R^N_+$ 
and $\lambda \coloneqq (\lambda_{(n,m)})_{(n,m)\in E} \in \R^M_+$
are positive weights.
Unfortunately, due to the constraints $x_n \in \Csphere$ , 
this problem is nonconvex.
In the first step,
to derive a convex relaxation,
we exploit $\lvert x_n \rvert = 1$

and hence we derive
\begin{linenomath*}
\begin{equation*}
    \lvert x_n - y_n\rvert^2 
    = \lvert x_n\rvert^2  - 2\Re[x_n \bar y_n] + \lvert y_n\rvert^2
    = - 2\Re[x_n \bar y_n] + \text{const}
\end{equation*}
\end{linenomath*}
and similarly 
\begin{linenomath*}
\begin{equation*}
    \lvert x_n - x_m\rvert^2 
    = \lvert x_m\rvert^2  - 2\Re[\bar x_m x_n]+ \lvert y_n\rvert^2
    = - 2\Re[\bar x_m x_n] + \text{const},
\end{equation*}
\end{linenomath*}
where $\Re[z] = \Re[\alpha + \I\beta] = \alpha$ 
for $z = \alpha + \I\beta \in \C$ with $\alpha, \beta \in \R$.
We rewrite
the original problem \eqref{eq:class-tik} as
\begin{linenomath*}
\begin{equation}    
    \label{eq:inner-tik}
    \argmin_{x \in \Csphere^N} \,
    - \sum_{n \in V} 
    w_n \, \Re[x_n \bar y_n] \,
    - \smashoperator{\sum_{(n,m) \in E}}
    \lambda_{(n,m)} \, \Re[\bar x_m x_n].
\end{equation}
\end{linenomath*}
Introducing 
$r \coloneqq (r_{(n,m)})_{(n,m) \in E} \in \C^M$,
we rearrange the minimization problem \eqref{eq:inner-tik} into
\begin{linenomath*}
\begin{equation}     
    \label{eq:ext-tik}
    \argmin_{x \in \Csphere^N , r \in \C^M} 
    \!\!\!\FJ(x,r)
    \text{ s.t. }
    r_{(n,m)} = \bar x_m x_n 
\end{equation}
\end{linenomath*}
for all $(n,m) \in E$ with
\begin{linenomath*}
\begin{equation}
    \FJ(x,r) 
    \coloneqq
    - \sum_{n \in V}
    w_n \, \Re[ x_n \bar y_n] 
    - \smashoperator{\sum_{(n,m) \in E}} 
    \lambda_{(n,m)} \, \Re[r_{(n,m)}].
\end{equation}
\end{linenomath*}
The core idea of Condat's convex relaxation \cite{condat_1D2D}
is to encode the nonconvex constraint $x \in \Csphere^N$
into a positive semi-definite, rank-one matrix.

\begin{lemma}
    \label{lem:1}
    Let $n,m \in V$ and $(n,m) \in E$.
    Then $x_n,x_m \in \Csphere$
    and $r_{(n,m)} = \bar x_m x_n $ 
    if and only if
    \begin{linenomath*}
    \begin{equation}    
        \label{eq:mat_s1}   
        P_{(n,m)}
        \coloneqq
        \begin{bmatrix}
            1 &  x_n & x_m \\
            \bar x_n & 1 & \bar r_{(n,m)} \\
            \bar x_m & r_{(n,m)} & 1
        \end{bmatrix}
        \in \C^{3 \times 3}
    \end{equation}
    \end{linenomath*}
    is positive semi-definite and has rank one.
\end{lemma}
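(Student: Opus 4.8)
The plan is to realise $P_{(n,m)}$ as a Hermitian rank-one outer product. Note first that $P_{(n,m)}$ is Hermitian by construction. I would then use the elementary fact that a Hermitian matrix $\MM \in \C^{3\times 3}$ is positive semi-definite and of rank one if and only if $\MM = \Vv\Vv^{*}$ for some $\Vv \in \C^{3}\setminus\{\Vzero\}$: the spectral theorem writes a rank-one Hermitian matrix as $\mu\,\Vv_0\Vv_0^{*}$ with $\Vv_0$ a unit eigenvector and $\mu \neq 0$ the unique nonzero eigenvalue, positive semi-definiteness is exactly the requirement $\mu > 0$, and the scalar $\mu$ can be absorbed into $\Vv \coloneqq \sqrt{\mu}\,\Vv_0$; the converse direction of this fact is immediate. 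With this observation in hand, the lemma reduces to comparing entries.

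For the forward implication, given $x_n, x_m \in \Csphere$ and $r_{(n,m)} = \bar x_m x_n$, I would exhibit the explicit vector $\Vv \coloneqq (1,\ \bar x_n,\ \bar x_m)^{\tT} \in \C^{3}$ and check directly that $\Vv\Vv^{*} = P_{(n,m)}$: the diagonal entries of $\Vv\Vv^{*}$ are $1, \lvert x_n\rvert^{2}, \lvert x_m\rvert^{2}$, all equal to $1$ since $x_n, x_m \in \Csphere$; the entries of the first row are $1, x_n, x_m$; and the $(3,2)$-entry equals $\bar x_m x_n = r_{(n,m)}$, whose conjugate $\bar x_n x_m = \overline{r_{(n,m)}}$ is the $(2,3)$-entry. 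Since $\Vv \neq \Vzero$, this shows $P_{(n,m)}$ is positive semi-definite of rank one.

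For the converse, suppose $P_{(n,m)} = \Vv\Vv^{*}$ with $\Vv = (v_1, v_2, v_3)^{\tT} \neq \Vzero$. Reading off the diagonal gives $\lvert v_1\rvert^{2} = \lvert v_2\rvert^{2} = \lvert v_3\rvert^{2} = 1$, so each $v_j$ has modulus one; comparing the off-diagonal entries yields $x_n = v_1\bar v_2$, $x_m = v_1\bar v_3$, and $r_{(n,m)} = v_3\bar v_2$. In particular $\lvert x_n\rvert = \lvert x_m\rvert = 1$, i.e.\ $x_n, x_m \in \Csphere$, and $\bar x_m x_n = \overline{v_1\bar v_3}\,v_1\bar v_2 = \lvert v_1\rvert^{2}\, v_3\bar v_2 = v_3\bar v_2 = r_{(n,m)}$, which closes the equivalence.

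I do not expect any genuine obstacle here: once the factorization $P_{(n,m)} = \Vv\Vv^{*}$ is available, the rest is a one-line entrywise verification. The only points demanding a little care are the bookkeeping of complex conjugates, so that the Hermitian-conjugate pair $r_{(n,m)}, \overline{r_{(n,m)}}$ is assigned to the correct off-diagonal positions, and the elementary remark that a rank-one Hermitian matrix factors as $\pm\Vv\Vv^{*}$ with the sign fixed to $+$ by positive semi-definiteness.
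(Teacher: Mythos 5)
Your proof is correct and follows essentially the same route as the paper: both directions rest on the factorization of a positive semi-definite rank-one Hermitian matrix as an outer product ($\Vv\Vv^{*}$ in your notation, $(a,b,c)^{\tH}(a,b,c)$ in the paper's), followed by entrywise comparison; your explicit vector $(1,\bar x_n,\bar x_m)^{\tT}$ is exactly the paper's $(1,x_n,x_m)^{\tH}$. The only difference is that you spell out the spectral-theorem justification of the rank-one factorization, which the paper takes as known.
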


\begin{proof}
    Let $x_n,x_m \in \Csphere$ and $r_{(n,m)} = \bar x_m x_n $,
    and denote the conjugation and transposition by $\cdot^\tH$.
    Then
    $
    P_{(n,m)} = (1, x_n, x_m)^\tH (1, x_n, x_m)
    $
    has rank one and is positive semi-definite. 
    Conversely, 
    positive semi-definitness and the rank-one condition imply that
    $P_{(n,m)}$ has the form
    \begin{linenomath*}
    \begin{equation*}
        P_{(n,m)} = (a,b,c)^\tH (a, b, c) = 
        \begin{bmatrix}                
            \bar a a &  \bar a b & \bar a c \\
            \bar b a &\bar b b & \bar b a \\
            \bar c a & \bar c b & \bar c c
        \end{bmatrix}
    \end{equation*}
    \end{linenomath*}
    for some $a,b,c \in \C$.
    Comparison with \eqref{eq:mat_s1} yields $|a|= |b| = |c| = 1$
    and thus $|x_n| = |\bar a| |b| = 1$, $|x_m| = |\bar a| |c| = 1$ 
    as well as 
    $r_{(n,m)} = \bar c b = \bar c a \bar a b = \bar x_m x_n$.
\end{proof}

We denote the rank function by $\rank$.
Based on Lemma~\ref{lem:1},
Problem \eqref{eq:ext-tik} and thus the original formulation \eqref{eq:class-tik} become
\begin{linenomath*}
\begin{equation}     \label{eq:ext-tik_end}
    \argmin_{x \in \C^N , r \in \C^M} 
    \FJ(x,r)
    \text{ s.t. }
    P_{(n,m)} \succeq 0  \; \text{and} \; \rank(P_{(n,m)}) = 1 
\end{equation}
\end{linenomath*}
for all $(n,m) \in E$.
Neglecting the rank-one constraint,
Condat \cite{condat_1D2D} proposes to solve the relaxation:
\begin{linenomath*}
\begin{align}     \label{eq:conv-tik}
    &\textbf{relaxed complex model}
    \notag \\
    &\argmin_{x \in \C^N, r \in \C^M} 
    \FJ (x, r) 
    \quad \text{s.t.} \quad 
    P_{(n,m)} \succeq 0
\end{align}
\end{linenomath*}
for all $(n,m) \in E$.
Since \eqref{eq:conv-tik} is a convex optimization problem,
we may apply standard methods from convex analysis
to obtain numerical solutions.

%-----------------------------------------------------------------------
\subsection{Related $\R^2$-Valued Model}
\label{sec:R2_val_mod}
%-----------------------------------------------------------------------

Alternatively, 
the sphere can be embedded in the Euclidean space $\R^2$
equipped with the Euclidean norm $\lVert \cdot \rVert$
and inner product $\langle \cdot, \cdot \rangle$.
To emphasize the relation to the complex-valued model,
we identify the complex number $z \in \C$ 
with the two-dimensional Euclidean vector $\Vz \coloneqq (\Re[z], \Im[z])^\tT \in \R^2$,
where~$\cdot^\tT$ denotes the transposition.
Against this background,
we denote the first and second component of $\Vz$ by 
$\Re[\Vz] \coloneqq \Vz_1$
and $\Im[\Vz] \coloneqq \Vz_2$
respectively.
Defining
$\sphere_1 \coloneqq \{\Vz \in \R^2: \lVert\Vz\rVert = 1\}$,
we now want to recover an $\sphere_1$-valued signal 
$\Vx \coloneqq (\Vx_n)_{n \in V} \in \sphere_1^N$ on $G$
from noisy measurements $\Vy \coloneqq (\Vy_n)_{n \in V} \in (\mathbb S_1)^N$.
Exploiting $\lvert z \rvert = \lVert \Vz \rVert$ for all $z \in \C$,
we rewrite \eqref{eq:class-tik} into the real-valued model
\begin{linenomath*}
\begin{equation}     \label{eq:class-tik_real_1}
    \argmin_{\Vx \in \sphere_1^N} 
    \sum_{n \in V} 
    \frac{w_n}{2} \, \lVert \Vx_n - \Vy_n \rVert^2
    + \smashoperator{\sum_{(n,m) \in E}} 
    \frac{\lambda_{(n,m)}}{2} \, \lVert \Vx_n - \Vx_m \rVert^2.
\end{equation}
\end{linenomath*}
To transfer the relaxation of the complex model,
we exploit that the complex multiplication 
can be realized using the matrix representation of $z \in \C$
given by
\begin{linenomath*}
\begin{equation}    \label{eq:2x2_real_pres}
    \Mcomplex(z) 
    \coloneqq
    \Mcomplex(\Vz)
    \coloneqq
    \left[
    \begin{smallmatrix}
        \Re[\Vz] & -\Im[\Vz] \\ 
        \Im[\Vz] & \Re[\Vz]
    \end{smallmatrix}
    \right].
\end{equation}
\end{linenomath*}
Introducing the variables $\Vr_{(n,m)} = \Mcomplex(\Vx_m)^\tT \Vx_n \in \R^2$,
the real-valued version of \eqref{eq:ext-tik} reads as
\begin{linenomath*}
\begin{equation}     
    \label{eq:ext-tik_real}
    \argmin_{\Vx \in \sphere_1^N , \Vr \in (\R^2)^M} 
    \FJ(\Vx, \Vr)
    \quad \text{s.t.} \quad
    \Vr_{(n,m)}	= \Mcomplex(\Vx_m)^\tT \Vx_n
\end{equation}
\end{linenomath*}
for all $(n,m) \in E$ with
\begin{linenomath*}
\begin{equation}\label{7'}
    \FJ(\Vx, \Vr) 
    \coloneqq
    - \sum_{n \in V}
    w_n \, \langle \Vx_n , {\Vy}_n \rangle 
    - \smashoperator{\sum_{(n,m) \in E}} 
    \lambda_{(n,m)} \, \Re[\Vr_{(n,m)}].
\end{equation}
\end{linenomath*}
Note that $\FJ$ does not depend on $\Im[\Vr_{(n,m)}]$
and is an artifact of the complex viewpoint.
In Section~\ref{sec:R2_simp_val_mod}, 
we show that this auxiliary variable can indeed be dropped 
from the optimization without changing the minimizer.
Denoting the $d \times d$ identity matrix by $\MI_d$,
where the index is omitted if the dimension is clear,
we analogously encode the nonconvex constraint of \eqref{eq:ext-tik_real}
into a matrix expression.

\begin{lemma} 
    \label{lem:2}
    Let $n,m \in V$ and $(n,m) \in E$.
    Then $\Vx_n,\Vx_m \in \sphere_1$
    and $\Vr_{(n,m)} = \Mcomplex (\Vx_m)^\tT \Vx_n$
    if and only if the block matrix
    \begin{linenomath*}
    \begin{equation}    \label{eq:mat_r2}
      \MP_{(n,m)} 
      \coloneqq
      \left[
        \begin{smallmatrix}
          \MI_2 & \Mcomplex(\Vx_n) & \Mcomplex(\Vx_m)\\
          \Mcomplex(\Vx_n)^\tT & \MI_2 &   \Mcomplex(\Vr_{(n,m)})^\tT\\
          \Mcomplex(\Vx_m)^\tT & \Mcomplex(\Vr_{(n,m)}) &  \MI_2
        \end{smallmatrix}
      \right]
      \in \R^{6 \times 6}
    \end{equation}
    \end{linenomath*}
    is positive semi-definite and has rank two.
\end{lemma}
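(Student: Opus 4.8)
The plan is to reduce Lemma~\ref{lem:2} to Lemma~\ref{lem:1} via the regular representation $\Mcomplex\colon\C\to\R^{2\times2}$ from \eqref{eq:2x2_real_pres}. The only facts I need are that $\Mcomplex$ is an injective unital $\R$-algebra homomorphism (additive and multiplicative) with $\Mcomplex(\bar z)=\Mcomplex(z)^\tT$ and $\Mcomplex(z)^\tT\Mcomplex(z)=\lvert z\rvert^2\MI_2$; in particular $\Mcomplex(\Vz)\in\mathrm{O}(2)$ if and only if $\Vz\in\sphere_1$, and $\Mcomplex(\Vx_m)^\tT\Vx_n$ corresponds to $\bar x_m x_n$ under the identification $\Vz\leftrightarrow z$ used throughout Section~\ref{sec:tikh}. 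These identities exhibit $\MP_{(n,m)}$ as the ``realified'' version of the matrix $P_{(n,m)}$ of Lemma~\ref{lem:1}, i.e.\ the matrix obtained from $P_{(n,m)}$ by replacing each scalar entry $p_{ij}$ by the $2\times2$ block $\Mcomplex(p_{ij})$. One could now simply cite the general fact that such realification sends a Hermitian positive semi-definite matrix of rank $k$ to a symmetric positive semi-definite matrix of rank $2k$ and invoke Lemma~\ref{lem:1}; I prefer to argue both implications directly, mirroring the proof of Lemma~\ref{lem:1}.

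For the forward direction, assume $\Vx_n,\Vx_m\in\sphere_1$ and $\Vr_{(n,m)}=\Mcomplex(\Vx_m)^\tT\Vx_n$, and set $B\coloneqq[\,\MI_2\ \ \Mcomplex(\Vx_n)\ \ \Mcomplex(\Vx_m)\,]\in\R^{2\times6}$. Using $\Mcomplex(\Vx_n)^\tT\Mcomplex(\Vx_n)=\MI_2$, $\Mcomplex(\Vx_m)^\tT\Mcomplex(\Vx_m)=\MI_2$, and $\Mcomplex(\Vx_n)^\tT\Mcomplex(\Vx_m)=\Mcomplex(\bar x_n x_m)=\Mcomplex(\overline{r_{(n,m)}})=\Mcomplex(\Vr_{(n,m)})^\tT$ (complex multiplication being commutative, $\bar x_n x_m=\bar x_m x_n=\overline{r_{(n,m)}}$), a block-wise computation gives $\MP_{(n,m)}=B^\tT B$. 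Hence $\MP_{(n,m)}\succeq0$, and $\rank\MP_{(n,m)}=\rank B=2$ since $B$ contains the $2\times2$ identity block.

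For the converse, assume $\MP_{(n,m)}\succeq0$ with $\rank\MP_{(n,m)}=2$, and write $\MP_{(n,m)}=C^\tT C$ with $C\in\R^{2\times6}$; split $C=[\,C_1\ \ C_2\ \ C_3\,]$ into $2\times2$ blocks. The three diagonal blocks force $C_i^\tT C_i=\MI_2$, and since each $C_i$ is square this upgrades to $C_i\in\mathrm{O}(2)$. The $(1,2)$- and $(1,3)$-blocks then read $\Mcomplex(\Vx_n)=C_1^\tT C_2\in\mathrm{O}(2)$ and $\Mcomplex(\Vx_m)=C_1^\tT C_3\in\mathrm{O}(2)$, whence $\Vx_n,\Vx_m\in\sphere_1$. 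Finally, using $C_1C_1^\tT=\MI_2$, the $(2,3)$-block gives $\Mcomplex(\Vr_{(n,m)})^\tT=C_2^\tT C_3=(C_1^\tT C_2)^\tT(C_1^\tT C_3)=\Mcomplex(\Vx_n)^\tT\Mcomplex(\Vx_m)=\Mcomplex(\bar x_m x_n)$, so injectivity of $\Mcomplex$ yields $r_{(n,m)}=\bar x_m x_n$, i.e.\ $\Vr_{(n,m)}=\Mcomplex(\Vx_m)^\tT\Vx_n$.

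The main obstacle is the converse, specifically ensuring that the rank-two assumption is genuinely exploited: it is precisely this hypothesis that forces $C$ to have only two rows, so that each $C_i$ is a square matrix and $C_i^\tT C_i=\MI_2$ implies $C_i\in\mathrm{O}(2)$ (and hence $C_1C_1^\tT=\MI_2$, which is needed to rewrite the $(2,3)$-block). The remaining work — the block-wise identities for $B^\tT B$ and $C^\tT C$ and the bookkeeping of transposes versus complex conjugates — is routine once the homomorphism properties of $\Mcomplex$ are in place.
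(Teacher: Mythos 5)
Your argument is correct and takes essentially the same route as the paper's proof: the forward direction via the factorization $\MP_{(n,m)}=[\MI_2,\ \Mcomplex(\Vx_n),\ \Mcomplex(\Vx_m)]^\tT[\MI_2,\ \Mcomplex(\Vx_n),\ \Mcomplex(\Vx_m)]$, and the converse by factoring the rank-two positive semi-definite matrix as $C^\tT C$ with $C\in\R^{2\times 6}$ and comparing blocks, which is exactly the "similarly as in Lemma~\ref{lem:1}" step the paper leaves implicit (your observation that the rank hypothesis is what makes each $C_i$ square, hence orthogonal, is the right point to stress). One bookkeeping slip to fix: $\bar x_n x_m\neq\bar x_m x_n$ in general (commutativity gives $\bar x_n x_m=x_m\bar x_n$, not $\bar x_m x_n$); the identity you actually need is $\bar x_n x_m=\overline{\bar x_m x_n}=\bar r_{(n,m)}$, which does hold and makes both of your $(2,3)$-block computations come out as stated, since $\Mcomplex(\Vx_n)^\tT\Mcomplex(\Vx_m)=\Mcomplex(\bar x_n x_m)=\Mcomplex(\bar x_m x_n)^\tT=\Mcomplex(\Vr_{(n,m)})^\tT$.
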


\begin{proof}
Let the conditions for $\Vx_n, \Vx_m$ and $\Vr_{(n,m)}$ be fulfilled.
Since  $\Mcomplex(\Vx_n)^\tT \Mcomplex(\Vx_n) = \lVert \Vx_n\rVert^2 \MI_2$ and
$\Mcomplex(\Vx_m)^\tT  \Mcomplex(\Vx_n) = \Mcomplex(\Vr_{(n,m)})$, we obtain
\begin{linenomath*}
\begin{equation*}
    \MP_{(n,m)} = 
    [\MI_2, \Mcomplex(\Vx_n), \Mcomplex(\Vx_m)]^\tT
    [\MI_2, \Mcomplex(\Vx_n), \Mcomplex(\Vx_m)]
\end{equation*}
\end{linenomath*}
implying $\rank(\MP_{(n,m)}) = 2$ and $\MP_{(n,m)} \succeq 0$.
The opposite direction follows similarly as in the proof of Lemma \ref{lem:1}.
\end{proof}

Due to Lemma~\ref{lem:2},
the real-valued formulation \eqref{eq:ext-tik_real} becomes
\begin{linenomath*}
\begin{equation}     \label{eq:ext-tik_end_real}
    \smashoperator{\argmin_{\substack{\Vx \in (\R^2)^N \\ \Vr \in (\R^2)^M}}}
    \FJ(\Vx, \Vr)
    \text{ s.t. }
    \MP_{(n,m)} \succeq 0 \text{ and } \rank(\MP_{(n,m)}) = 2 
\end{equation}
\end{linenomath*}
for all $(n,m) \in E$.
Since obviously $\MP_{(n,m)} \succeq 0$ if and only if $P_{(n,m)} \succeq 0$
from \eqref{eq:mat_s1},
neglecting the rank-two constraint in \eqref{eq:ext-tik_end_real}
yields the relaxed formulation:
\begin{linenomath*}
\begin{align}     \label{eq:conv-tik_real}
    &\textbf{relaxed real model}
    \notag \\
    &\argmin_{\Vx \in (\R^2)^N, \Vr \in (\R^2)^M }
    \FJ(\Vx, \Vr)
    \quad \text{s.t.} \quad
    \MP_{(n,m)} \succeq 0  
\end{align}
\end{linenomath*}
for all $(n,m) \in E$,
which is just the real-valued version of 
complex relaxation \eqref{eq:conv-tik}. 

%-----------------------------------------------------------------------
\subsection{Simplified $\R^2$-Valued Model}
\label{sec:R2_simp_val_mod}
%-----------------------------------------------------------------------

Note that the seconds variables 
$\Vr_{(n,m)} = \Mcomplex(\Vx_m)^\tT \Vx_n$ 
in \eqref{eq:ext-tik_real}
are artificial and originate form the complex-valued model.
Therefore the minimization problem \eqref{eq:ext-tik_real}
seems to be needlessly puffed-up
since we also optimize with respect to the variables $\Im(\Vr_{(n,m)})$
that do not influence the objective 
$\mathcal J$ in \eqref{7'} at all.
To this end,  
we propose to reformulate the real-valued problem \eqref{eq:class-tik_real_1} as
\begin{linenomath*}
\begin{equation}
    \label{eq:ext-real-tik}
    \argmin_{\Vx \in \sphere_1^N , \Vell \in \R^M} 
    \FK(\Vx,\Vell)
    \quad \text{s.t.} \quad
    \Vell_{(n,m)} = \langle \Vx_n, \Vx_m \rangle
\end{equation}
\end{linenomath*}
for all $(n,m) \in E$ with 
\begin{linenomath*}
\begin{equation}
    \FK(\Vx,\Vell) 
    \coloneqq
    - \sum_{n \in V}
    w_n \, \langle \Vx_n, \Vy_n\rangle 
    - \smashoperator{\sum_{(n,m) \in E}} 
    \lambda_{(n,m)} \, \Vell_{(n,m)},
\end{equation}
\end{linenomath*}

using a similar rewriting process for the squared 2-norm as in the previous sections.
As before and with a similar proof,
the nonconvex constraints $\Vx_n \in \sphere_1$
and $\Vell_{(n,m)} = \langle \Vx_n, \Vx_m \rangle$ may be encoded using 
the positive semi-definiteness and a rank condition of an appropriate matrix.

\begin{lemma} \label{lem:3}
    Let $n,m \in V$ and $(n,m) \in E$.
    Then $\Vx_n, \Vx_m \in \sphere_1$ 
    and $\Vell_{(n,m)} = \langle \Vx_n, \Vx_m \rangle$
    if and only if
    the block matrix
    \begin{linenomath*}
    \begin{equation}    \label{eq:mat_real}
        \MQ_{(n,m)}
        \coloneqq
        \begin{bmatrix}
            \MI_2 & \Vx_n & \Vx_m \\
            \Vx_n^\tT & 1 & \Vell_{(n,m)} \\
            \Vx_m^\tT & \Vell_{(n,m)} & 1
        \end{bmatrix}
        \in \R^{4 \times 4}
    \end{equation}
    \end{linenomath*}
    is positive semi-definite and has rank two. 
\end{lemma}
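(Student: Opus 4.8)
The plan is to mimic the proof of Lemma~\ref{lem:1}, but now working over $\R$ with the $2\times2$ identity block $\MI_2$ in the top-left corner playing the role that the scalar $1$ played there. The key structural observation is that $\MQ_{(n,m)}$ in \eqref{eq:mat_real} has a natural $2\times3$ ``square-root'' candidate, namely the block matrix $\MW \coloneqq [\MI_2, \Vx_n, \Vx_m] \in \R^{2\times 3}$, whose Gram matrix $\MW^\tT \MW$ we want to recognize as $\MQ_{(n,m)}$.

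First, for the forward direction, I would assume $\Vx_n, \Vx_m \in \sphere_1$ and $\Vell_{(n,m)} = \langle \Vx_n, \Vx_m\rangle$, and simply compute $\MW^\tT \MW$ block by block: the $(1,1)$ block is $\MI_2^\tT \MI_2 = \MI_2$; the $(1,2)$ block is $\MI_2^\tT \Vx_n = \Vx_n$ and likewise $(1,3)$ gives $\Vx_m$; the $(2,2)$ entry is $\Vx_n^\tT \Vx_n = \lVert \Vx_n \rVert^2 = 1$ and similarly $(3,3)$ gives $1$; and the $(2,3)$ entry is $\Vx_n^\tT \Vx_m = \langle \Vx_n, \Vx_m \rangle = \Vell_{(n,m)}$. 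Hence $\MQ_{(n,m)} = \MW^\tT \MW$, which is automatically positive semi-definite, and its rank equals $\rank(\MW) = 2$ since $\MW$ contains the $2\times 2$ identity as a submatrix (so it has full row rank $2$, and a $3$-column, $2$-row matrix cannot have rank exceeding $2$).

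For the converse, I would assume $\MQ_{(n,m)} \succeq 0$ with $\rank(\MQ_{(n,m)}) = 2$. Then $\MQ_{(n,m)}$ admits a factorization $\MQ_{(n,m)} = \MW^\tT \MW$ for some $\MW \in \R^{2\times 4}$ of rank two; write $\MW = [\MA, \Vb, \Vc]$ with $\MA \in \R^{2\times 2}$ and $\Vb, \Vc \in \R^2$. Comparing with \eqref{eq:mat_real}: the $(1,1)$ block gives $\MA^\tT \MA = \MI_2$, so $\MA$ is orthogonal; the $(2,2)$ and $(3,3)$ entries give $\lVert \Vb \rVert = \lVert \Vc \rVert = 1$; the $(1,2)$ and $(1,3)$ blocks give $\Vx_n = \MA^\tT \Vb$ and $\Vx_m = \MA^\tT \Vc$, whence $\lVert \Vx_n \rVert = \lVert \Vb \rVert = 1$ and $\lVert \Vx_m \rVert = \lVert \Vc \rVert = 1$ because $\MA^\tT$ is an isometry; and the $(2,3)$ entry gives $\Vell_{(n,m)} = \Vb^\tT \Vc = (\MA^\tT \Vb)^\tT (\MA^\tT \Vc) = \langle \Vx_n, \Vx_m \rangle$, again using $\MA \MA^\tT = \MI_2$. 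This establishes all the claimed conditions.

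I do not expect a serious obstacle here; the argument is routine once the right square-root matrix $\MW = [\MI_2, \Vx_n, \Vx_m]$ is identified. The only mild subtlety, as in Lemma~\ref{lem:1}, is to make the rank bookkeeping clean: one should note explicitly that a positive semi-definite matrix of rank $k$ factors as $\MW^\tT \MW$ with $\MW$ having exactly $k$ rows (e.g.\ via the eigendecomposition), so that the factor $\MA$ really is a $2\times 2$ orthogonal matrix rather than merely having orthonormal columns inside a larger ambient space. With that in hand, the block comparison goes through verbatim, and one can reasonably abbreviate it by saying ``the opposite direction follows similarly as in the proof of Lemma~\ref{lem:1}'' as the authors did for Lemma~\ref{lem:2}.
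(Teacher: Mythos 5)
Your proof is correct and takes essentially the same route as the paper: the paper omits an explicit proof of Lemma~\ref{lem:3}, stating only that it follows ``with a similar proof'' to Lemmas~\ref{lem:1} and~\ref{lem:2}, and your Gram-factorization argument with $\MW = [\MI_2, \Vx_n, \Vx_m]$ is precisely that analogue, including the correct handling of the rank bookkeeping in the converse direction.
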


In this paper, we relax the rank-two constraint and
propose to solve our new relaxed formulation:
\begin{linenomath*}
\begin{align}    
&\textbf{simplified relaxed real model}\\[1ex]
&\argmin_{\Vx \in (\R^2)^N, \Vell \in \R^M} 
    \FK (\Vx, \Vell) 
    \quad \text{s.t.} \quad 
    \MQ_{(n,m)} \succeq 0 \label{eq:conv-real-tik:2} 
\end{align}
\end{linenomath*}
for all $(n,m) \in E$.
Our convex model \eqref{eq:conv-real-tik:2} is simpler than \eqref{eq:conv-tik_real}, since
$\Vr_{(n,m)} \in \R^2$ is replaced by $\Vell_{(n,m)} \in \R$.
Nevertheless, both problems lead to the same
solutions.
To show this, we need a result on the Schur complement of matrices.

\begin{proposition}[{\!\!\cite[p. 495]{HJ13}}]     \label{prop:schur-comp}
    For invertible $\MA \in \R^{\ell\times\ell}$, it holds
    \begin{linenomath*}
    $$
    \MW = \left[
        \begin{smallmatrix}
            \MA & \MC \\
            \MC^\tT & \MB
        \end{smallmatrix}
        \right]
       =
       \left[
        \begin{smallmatrix}
\MI & \Mzero \\
            \MC^\tT \MA^{-1}& \MI
        \end{smallmatrix}
        \right] 
        \left[
        \begin{smallmatrix}
\MA & \Mzero \\
\Mzero&    \MB - \MC^\tT \MA^{-1} \MC
        \end{smallmatrix}
\right]         
    \left[
        \begin{smallmatrix}
            \MI & \MA^{-1} \MC \\
            \Mzero^\tT & \MI
        \end{smallmatrix}
        \right]
        \in \R^{\ell+n\times\ell+n},          
    $$
    \end{linenomath*}
where $\MB \in \R^{n\times n}$ and $\MC \in \R^{\ell\times n}$.
The matrix $\MW/\MA \coloneqq \MB - \MC^\tT \MA^{-1} \MC \in \R^{n\times n}$ 
is called \emph{Schur complement} of $\MW$ and we have
    $\MW  \succeq 0$  
    if and only if $\MA \succ 0$ and 
    $\MW/\MA \succeq 0$.
   \end{proposition}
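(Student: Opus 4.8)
The plan is to read the displayed identity as a congruence transformation $\MW = \MV^\tT\MZ\MV$ with $\MZ$ block-diagonal and $\MV$ invertible, and then to use the elementary fact that congruence preserves positive semi-definiteness.

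First I would verify the factorization by multiplying out the three factors on the right-hand side. This is a routine block computation: using $\MA^{-1}\MA = \MI$ together with the symmetry $\MA = \MA^\tT$---which is forced because $\MW$ is symmetric, so that its principal block $\MA$ is symmetric and hence $(\MA^{-1}\MC)^\tT = \MC^\tT\MA^{-1}$---the product collapses precisely to the four blocks $\MA$, $\MC$, $\MC^\tT$, $\MB$ of $\MW$. Setting $\MV \coloneqq \left[\begin{smallmatrix}\MI & \MA^{-1}\MC\\ \Mzero & \MI\end{smallmatrix}\right]$, the leftmost factor is exactly $\MV^\tT$, so the identity reads
\[
  \MW = \MV^\tT \begin{bmatrix}\MA & \Mzero\\ \Mzero & \MW/\MA\end{bmatrix} \MV .
\]
Since $\MV$ is unit upper triangular, it is invertible (indeed $\det\MV = 1$), so $\Vv \mapsto \MV\Vv$ is a bijection of $\R^{\ell+n}$.

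Next I would exploit this bijection. For every $\Vv \in \R^{\ell+n}$ one has $\Vv^\tT\MW\Vv = (\MV\Vv)^\tT \diag(\MA, \MW/\MA)(\MV\Vv)$, and since $\MV$ is onto, $\MW \succeq 0$ holds if and only if $\diag(\MA,\MW/\MA) \succeq 0$. A symmetric block-diagonal matrix is positive semi-definite exactly when each of its diagonal blocks is, so this is equivalent to $\MA \succeq 0$ and $\MW/\MA \succeq 0$ holding simultaneously. Finally, because $\MA$ is assumed invertible, $\MA \succeq 0$ is the same as $\MA \succ 0$ (a positive semi-definite matrix with trivial kernel is positive definite); substituting this yields the claimed equivalence $\MW \succeq 0$ if and only if $\MA \succ 0$ and $\MW/\MA \succeq 0$.

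I do not expect a genuine obstacle: the statement is classical and the argument is three short steps. The only points worth making explicit are (i) that the symmetry of $\MW$ is what makes the two outer factors mutually transposed, so that the factorization is a bona fide congruence and not merely a similarity, and (ii) that the invertibility hypothesis on $\MA$ is used exactly once, in the last step, to upgrade $\MA \succeq 0$ to $\MA \succ 0$.
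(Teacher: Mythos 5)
Your proof is correct. The paper gives no proof of this proposition at all — it is quoted verbatim from Horn and Johnson with the congruence factorization already displayed in the statement — and your argument (verify the block factorization, observe it is a congruence by a unit upper-triangular matrix, reduce to the block-diagonal case, and use invertibility to upgrade $\MA \succeq 0$ to $\MA \succ 0$) is exactly the standard completion that the displayed identity is designed to support.
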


Now we can prove the main equivalence theorem.

%-------------------------------------------------------------------
\begin{theorem} \label{tho:equiv-S1}
    Problems 
    \eqref{eq:conv-tik_real} and \eqref{eq:conv-real-tik:2}
    are equivalent in the following sense:
    \begin{enumerate}[(i)]
        \item  
        If $(\hat {\Vx}, \hat {\Vr})$ solves \eqref{eq:conv-tik_real}, then
        $(\hat{\Vx}, \Re[\hat \Vr])$
        solves \eqref{eq:conv-real-tik:2}.
        \item 
        If $(\tilde{\Vx}, \tilde \Vell)$
        solves 
        \eqref{eq:conv-real-tik:2}, then
        $(\tilde{\Vx}, \tilde {\Vr})$
        with 
        $\tilde {\Vr}_{(n,m)} 
        = (\tilde \Vell_{(n,m)}, \Im[\Mcomplex(\tilde \Vx_m)^\tT \tilde \Vx_n ])^\tT$
        solves \eqref{eq:conv-tik_real}.
    \end{enumerate}
\end{theorem}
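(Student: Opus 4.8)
The plan is to deduce the equivalence from two observations: that the two objective functionals agree under the stated correspondences, and that feasible points of \eqref{eq:conv-tik_real} and \eqref{eq:conv-real-tik:2} can be converted into one another edge by edge. Once both are available, the equivalence follows by the usual two-sided comparison of optimal values. The first observation is immediate: $\FJ$ in \eqref{7'} depends on $\Vr$ only through the first components $\Re[\Vr_{(n,m)}]$, so $\FJ(\Vx,\Vr)=\FK\bigl(\Vx,(\Re[\Vr_{(n,m)}])_{(n,m)\in E}\bigr)$ for all $\Vx$, $\Vr$; hence the map $\hat\Vr\mapsto\Re[\hat\Vr]$ of part~(i), and the map $\tilde\Vell\mapsto\tilde\Vr$ of part~(ii) (for which $\Re[\tilde\Vr_{(n,m)}]=\tilde\Vell_{(n,m)}$ by construction), leave the objective unchanged.

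For the feasibility transfer I would use the Schur complement, Proposition~\ref{prop:schur-comp}. Fix an edge $(n,m)$ and abbreviate $a\coloneqq\langle\Vx_n,\Vx_m\rangle$, $b\coloneqq\Im[\Mcomplex(\Vx_m)^\tT\Vx_n]$; since $\Mcomplex$ is multiplicative with $\Mcomplex(\Vx_j)^\tT=\Mcomplex(\overline{\Vx_j})$, one has $\Mcomplex(\Vx_j)^\tT\Mcomplex(\Vx_j)=\lVert\Vx_j\rVert^2\MI_2$ and $\Mcomplex(\Vx_m)^\tT\Mcomplex(\Vx_n)=\Mcomplex\bigl((a,b)^\tT\bigr)$. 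Taking the Schur complement of $\MP_{(n,m)}$ in \eqref{eq:mat_r2} with respect to the positive definite top-left block $\MI_2$ yields a $4\times4$ matrix with diagonal blocks $(1-\lVert\Vx_n\rVert^2)\MI_2$, $(1-\lVert\Vx_m\rVert^2)\MI_2$ and off-diagonal block $\Mcomplex(\Vv)$, where $\Vv\coloneqq\Vr_{(n,m)}-(a,b)^\tT$; using $\Mcomplex(\Vv)\Mcomplex(\Vv)^\tT=\lVert\Vv\rVert^2\MI_2$, a second Schur step shows that $\MP_{(n,m)}\succeq0$ is equivalent to
\[
\lVert\Vx_n\rVert\le1,\quad\lVert\Vx_m\rVert\le1,\quad(1-\lVert\Vx_n\rVert^2)(1-\lVert\Vx_m\rVert^2)\ge(\Re[\Vr_{(n,m)}]-a)^2+(\Im[\Vr_{(n,m)}]-b)^2 .
\]
Running the same two steps for $\MQ_{(n,m)}$ in \eqref{eq:mat_real} gives that $\MQ_{(n,m)}\succeq0$ is equivalent to $\lVert\Vx_n\rVert\le1$, $\lVert\Vx_m\rVert\le1$ and $(1-\lVert\Vx_n\rVert^2)(1-\lVert\Vx_m\rVert^2)\ge(\Vell_{(n,m)}-a)^2$. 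Comparing the two: if $\MP_{(n,m)}\succeq0$, then dropping the nonnegative term $(\Im[\Vr_{(n,m)}]-b)^2$ gives $\MQ_{(n,m)}\succeq0$ with $\Vell_{(n,m)}=\Re[\Vr_{(n,m)}]$; conversely, if $\MQ_{(n,m)}\succeq0$, then the choice $\Vr_{(n,m)}=\bigl(\Vell_{(n,m)},\,\Im[\Mcomplex(\Vx_m)^\tT\Vx_n]\bigr)^\tT$ makes that term vanish, so $\MP_{(n,m)}\succeq0$. These are precisely the two feasibility transfers.

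Combining the pieces: for part~(i), the first transfer makes $(\hat\Vx,\Re[\hat\Vr])$ feasible for \eqref{eq:conv-real-tik:2} with $\FK(\hat\Vx,\Re[\hat\Vr])=\FJ(\hat\Vx,\hat\Vr)$, and for any competitor $(\Vx,\Vell)$ feasible for \eqref{eq:conv-real-tik:2} the second transfer yields $(\Vx,\Vr)$ feasible for \eqref{eq:conv-tik_real} with $\FJ(\Vx,\Vr)=\FK(\Vx,\Vell)$; optimality of $(\hat\Vx,\hat\Vr)$ then forces $\FK(\Vx,\Vell)\ge\FK(\hat\Vx,\Re[\hat\Vr])$, so $(\hat\Vx,\Re[\hat\Vr])$ solves \eqref{eq:conv-real-tik:2}. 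Part~(ii) is the mirror image, with the two transfers exchanging roles. The step that needs a little care is the degenerate case $\lVert\Vx_n\rVert=1$ or $\lVert\Vx_m\rVert=1$, in which the second application of Proposition~\ref{prop:schur-comp} is not admissible; there one argues directly that a positive semidefinite matrix with a vanishing diagonal block has the corresponding off-diagonal blocks equal to zero, which leads to the same characterization (equivalently, one identifies the $4\times4$ block above with a $2\times2$ complex Hermitian matrix and uses the familiar $2\times2$ criterion).
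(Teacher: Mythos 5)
Your proof is correct and follows essentially the same route as the paper's: a Schur complement with respect to the top-left identity block $\MI_2$ to transfer feasibility between the two semidefinite constraint sets in both directions, combined with the observation that the objectives agree under the stated correspondences and the standard two-sided comparison of optimal values. The only difference is that you push the Schur reduction one step further to explicit scalar inequalities, which forces you to treat the boundary case $\lVert\Vx_n\rVert=1$ separately (the paper avoids this by stopping after the first Schur step, since $\MI_2\succ 0$ always, and comparing the resulting $4\times4$ and $2\times2$ complements directly), but this does not change the substance of the argument.
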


\begin{proof}
The Schur complement of $\MQ_{m,n}$
with respect to $\MI_2$ is given by 
\begin{linenomath*}
\begin{equation} \label{schur_1}
\MQ_{(n,m)}/ \MI_2 
=
\left[
\begin{smallmatrix}
            1 - \lvert \Vx_n \rvert^2 
            & \Vell_{(n,m)} - \langle \Vx_n , \Vx_m\rangle\\
            \Vell_{(n,m)} - \langle \Vx_n , \Vx_m\rangle
            & 1 - \lvert \Vx_m \rvert^2
\end{smallmatrix}
\right]
\end{equation}
\end{linenomath*}
On the other hand, permuting the fourth and fifth column/row 
of $\MP_{(n,m)}$,
we obtain %$\tilde\MP_{(n,m)}$ in \eqref{eq:perm-p}, 
\begin{linenomath*}
\begin{align}       \label{eq:perm-p}
        \tilde\MP_{(n,m)} 
        \coloneqq
        {\small
        \left[
        \scalebox{0.85}{
        $\begin{array}{cc|cc|cc}
            1 & 0 & \Re[\Vx_n] & \Re[\Vx_m] & -\Im[\Vx_n] & -\Im[\Vx_m] \\
            0 & 1 & \Im[\Vx_n] & \Im[\Vx_m] & \Re[\Vx_n] & \Re[\Vx_m] \\
            \hline
            \Re[\Vx_n] & \Im[\Vx_n] & 1 & \Re[\Vr_{(n,m)}] & 0 & \Im[\Vr_{(n,m)}] \\
            \Re[\Vx_m] & \Im[\Vx_m] & \Re[\Vr_{(n,m)}] & 1 & -\Im[\Vr_{(n,m)}] & 0 \\
            \hline
            -\Im[\Vx_n] & \Re[\Vx_n] & 0 & -\Im[\Vr_{(n,m)}] & 1 & \Re[\Vr_{(n,m)}] \\
            -\Im[\Vx_m] & \Re[\Vx_m] & \Im[\Vr_{(n,m)}] & 0 & \Re[\Vr_{(n,m)}] & 1
        \end{array}$}
        \right],
        } 
\end{align}
\end{linenomath*}
which has the Schur complement %\eqref{eq:schur-p}, 
%see next page.
\begin{linenomath*}
\begin{align}
        \tilde\MP_{(n,m)} / \MI_2 =
        \left[
        \scalebox{0.64}{
        $\begin{array}{cc|cc}
            1 - \lvert \Vx_n \rvert^2 
            & \Re[\Vr_{(n,m)}] - \Re[\Mcomplex( \Vx_m)^\tT  \Vx_n ]
            & 0
            & \Im[\Vr_{(n,m)}] - \Im[\Mcomplex( \Vx_m)^\tT  \Vx_n ]
            \\
            \Re[\Vr_{(n,m)}] - \Re[\Mcomplex( \Vx_m)^\tT  \Vx_n ]
            & 1 - \lvert \Vx_m \rvert^2
            & - \Im[\Vr_{(n,m)}] + \Im[\Mcomplex( \Vx_m)^\tT  \Vx_n ]
            & 0
            \\
            \hline
            0
            & -\Im[\Vr_{(n,m)}] + \Im[\Mcomplex( \Vx_m)^\tT  \Vx_n ]
            & 1 - \lvert \Vx_n \rvert^2
            & \Re[\Vr_{(n,m)}] - \Re[\Mcomplex( \Vx_m)^\tT  \Vx_n ]
            \\
            \Im[\Vr_{(n,m)}] - \Im[\Mcomplex( \Vx_m)^\tT  \Vx_n ]
            & 0
            & \Re[\Vr_{(n,m)}] - \Re[\Mcomplex( \Vx_m)^\tT  \Vx_n ]
            & 1 - \lvert \Vx_m \rvert^2
            \label{eq:schur-p}
        \end{array}$}
        \right]\!\! .
\end{align}
\end{linenomath*}
    Recall that 
    $\Re[\Mcomplex( \Vx_m)^\tT  \Vx_n ] 
    = \langle \Vx_n, \Vx_m \rangle$.
    Using the Schur complements in \eqref{schur_1} and \eqref{eq:schur-p} 
    as well as 
    Proposition \ref{prop:schur-comp}, 
    we obtain the following:		
    if $(\hat {\Vx}, \hat {\Vr})$ solves \eqref{eq:conv-tik_real}, then 
    $(\hat{\Vx}, \hat \Vell)$ with $\hat \Vell \coloneqq \Re[\hat \Vr]$ is a feasible point of \eqref{eq:conv-real-tik:2}.
    Moreover, it holds by their definition that $\FK(\hat{\Vx}, \hat \Vell) = \FJ(\hat {\Vx}, \hat {\Vr})$.
    
    Conversely,	if $(\tilde{\Vx}, \tilde{\Vell})$
    solves 
    \eqref{eq:conv-real-tik:2}, then
    $(\tilde {\Vx}, \tilde {\Vr})$ with $\tilde {\Vr}$ defined in the theorem
    is a feasible point of \eqref{eq:conv-tik_real}.
    Further, we have $\FK(\tilde{\Vx}, \tilde \Vell) = \FJ(\tilde {\Vx}, \tilde {\Vr})$.
    Taking the minimizing property into account, we conclude  
    \begin{linenomath*}
    $$
    \FK(\tilde{\Vx}, \tilde \Vell) \le  
    \FK(\hat{\Vx}, \hat \Vell) 
    = 
    \FJ(\hat {\Vx}, \hat {\Vr}) \le \FJ(\tilde{\Vx}, \tilde {\Vr}) = \FK(\tilde{\Vx}, \tilde \Vell),
    $$
    \end{linenomath*}
    which is only possible if all values coincides.		
\end{proof}

Our previous considerations yield immediately the following corollary.

\begin{corollary} \label{cor:equiv-S1}
    Problems 
    \eqref{eq:conv-tik} and \eqref{eq:conv-real-tik:2}
    are equivalent in the following sense:
    \begin{enumerate}[(i)]
        \item  
        If $(\hat {x}, \hat {r})$ solves \eqref{eq:conv-tik}, then
        $(\hat{\Vx}, \Re[\hat r])$
        solves \eqref{eq:conv-real-tik:2}.
        \item 
        If $(\tilde{\Vx}, \tilde \Vell)$
        solves 
        \eqref{eq:conv-real-tik:2}, then
        $(\tilde{x}, \tilde {r})$
        with 
        $\tilde {r}_{(n,m)} 
        = \tilde \Vell_{(n,m)} + \I \Im[\overline{\tilde x}_m \tilde x_n ] $
        solves \eqref{eq:conv-tik_real}.
    \end{enumerate} 
\end{corollary}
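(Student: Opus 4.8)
The plan is to derive Corollary~\ref{cor:equiv-S1} by composing two facts. The first is that the relaxed complex model \eqref{eq:conv-tik} and the relaxed real model \eqref{eq:conv-tik_real} are literally the same optimization problem, only written in two different notations; the second is Theorem~\ref{tho:equiv-S1}, which already bridges \eqref{eq:conv-tik_real} and \eqref{eq:conv-real-tik:2}. Hence the only genuine task is to make the first fact precise and then push the translation dictionary through the statements of Theorem~\ref{tho:equiv-S1}.

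For the first step I would invoke the componentwise identification $z = \alpha + \I\beta \leftrightarrow \Vz = (\alpha,\beta)^\tT$ from Section~\ref{sec:R2_val_mod}, applied to $x \leftrightarrow \Vx$, $y \leftrightarrow \Vy$ and $r \leftrightarrow \Vr$. Under this identification the two objectives coincide, $\FJ(x,r) = \FJ(\Vx,\Vr)$, because $\Re[x_n \bar y_n] = \langle \Vx_n, \Vy_n\rangle$ and $\Re[r_{(n,m)}]$ is precisely the first component of $\Vr_{(n,m)}$, i.e.\ the quantity denoted $\Re[\Vr_{(n,m)}]$ in the real objective. The feasible sets coincide as well: the block matrix $\MP_{(n,m)}$ from \eqref{eq:mat_r2} is, up to the harmless row/column permutation carried out in \eqref{eq:perm-p}, exactly the real $2\times 2$-block realization of the Hermitian matrix $P_{(n,m)}$ from \eqref{eq:mat_s1}, and a Hermitian matrix is positive semi-definite if and only if this realization is, so $P_{(n,m)} \succeq 0 \Leftrightarrow \MP_{(n,m)} \succeq 0$, as already observed right before \eqref{eq:conv-tik_real}. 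Therefore $(x,r) \leftrightarrow (\Vx,\Vr)$ is a bijection between feasible points of \eqref{eq:conv-tik} and \eqref{eq:conv-tik_real} preserving the objective value, hence it maps the solution set of one problem onto that of the other.

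I would then chain this bijection with Theorem~\ref{tho:equiv-S1}. For part (i): if $(\hat x,\hat r)$ solves \eqref{eq:conv-tik}, then $(\hat\Vx,\hat\Vr)$ solves \eqref{eq:conv-tik_real}, so Theorem~\ref{tho:equiv-S1}(i) yields that $(\hat\Vx, \Re[\hat\Vr])$ solves \eqref{eq:conv-real-tik:2}; since $\Re[\hat\Vr_{(n,m)}] = \Re[\hat r_{(n,m)}]$, this is the claimed statement. For part (ii): if $(\tilde\Vx,\tilde\Vell)$ solves \eqref{eq:conv-real-tik:2}, then Theorem~\ref{tho:equiv-S1}(ii) gives that $(\tilde\Vx,\tilde\Vr)$ with $\tilde\Vr_{(n,m)} = (\tilde\Vell_{(n,m)}, \Im[\Mcomplex(\tilde\Vx_m)^\tT \tilde\Vx_n])^\tT$ solves \eqref{eq:conv-tik_real}, so by the bijection the complex pair $(\tilde x,\tilde r)$ with $\tilde r_{(n,m)} = \tilde\Vell_{(n,m)} + \I\,\Im[\Mcomplex(\tilde\Vx_m)^\tT \tilde\Vx_n]$ solves the relaxed complex model \eqref{eq:conv-tik}. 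Finally, since $\Mcomplex(\Vz)^\tT = \Mcomplex(\bar z)$ and $\Mcomplex(\cdot)$ realizes complex multiplication, $\Mcomplex(\tilde\Vx_m)^\tT \tilde\Vx_n$ corresponds to $\overline{\tilde x}_m \tilde x_n$, whence $\Im[\Mcomplex(\tilde\Vx_m)^\tT \tilde\Vx_n] = \Im[\overline{\tilde x}_m \tilde x_n]$, which is the form stated in the corollary.

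I do not anticipate a real obstacle; the argument is essentially bookkeeping. The only point that deserves a careful line or two is the complex-to-real dictionary: namely that transposition of $\Mcomplex(\cdot)$ implements complex conjugation and that $\MP_{(n,m)}$ is, up to permutation, the real realization of $P_{(n,m)}$, so that the positive-semidefiniteness equivalence and the final identity for $\tilde r_{(n,m)}$ hold verbatim rather than merely up to notation.
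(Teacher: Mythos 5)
Your proposal is correct and follows essentially the same route as the paper: the paper derives the corollary immediately from Theorem~\ref{tho:equiv-S1} together with the observation (made just before \eqref{eq:conv-tik_real}) that $P_{(n,m)} \succeq 0$ iff $\MP_{(n,m)} \succeq 0$, so that \eqref{eq:conv-tik} and \eqref{eq:conv-tik_real} are the same problem under the complex-to-real dictionary. Your careful spelling-out of that dictionary (objective values agree, $\Mcomplex(\cdot)^\tT$ implements conjugation) is exactly the bookkeeping the paper leaves implicit; note also that in part~(ii) the conclusion should read ``solves \eqref{eq:conv-tik}'' as you correctly derive, the reference to \eqref{eq:conv-tik_real} in the stated corollary being a typo.
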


%---------------------------------------------------------------------------------
\section{Generalization to Sphere-Valued Data} \label{sec:sd}
%---------------------------------------------------------------------------------

The main advantage of the real-valued, relaxed model \eqref{eq:conv-real-tik:2} is 
its simple generalization to higher dimensions,

which we derive in this section.
Defining the $(d-1)$-dimensional sphere as
$\sphere_{d-1} \coloneqq \{ \Vx \in \R^{d} :  \lVert \Vx \rVert = 1\}$,
we aim to determine the signal $\Vx \coloneqq (\Vx_n)_{n\in V} \in \sphere_{d-1}^N$ on $G$
from perturbed values $\Vy \coloneqq (\Vy_n)_{n\in V} \in (\mathbb S_{d-1})^N$.
Note that the following relaxation remains valid for $\Vy_n \in \R^d$.
We start with the original, nonconvex problem
\begin{linenomath*}
\begin{equation}    
    \label{eq:class-tik_real_rd}
    \argmin_{\Vx \in \sphere_{d-1}^N} 
    \sum_{n \in V} 
    \frac{w_n}{2} \, \lVert \Vx_n - \Vy_n \rVert^2
    + \smashoperator{\sum_{(n,m) \in E}}
    \frac{\lambda_{(n,m)}}{2} \, \lVert \Vx_n - \Vx_m \rVert^2,
\end{equation}
\end{linenomath*}
and its rewritten version
\begin{linenomath*}
\begin{equation}     
    \label{eq:ext-real-tik_rd}
    \argmin_{\Vx \in \sphere_{d-1}^N , \Vell \in \R^M} 
    \FK(\Vx,\Vell)
    \quad \text{s.t.} \quad
    \Vell_{(n,m)} = \langle \Vx_n, \Vx_m \rangle
\end{equation}
\end{linenomath*}
for all $(n,m) \in E$ with
\begin{linenomath*}
\begin{equation}
    \FK(\Vx,\Vell) 
    \coloneqq
    - \sum_{n \in V}
    w_n \, \langle \Vx_n, \Vy_n\rangle 
    - \smashoperator{\sum_{(n,m) \in E}} 
    \lambda_{(n,m)} \, \Vell_{(n,m)}
\end{equation}
\end{linenomath*}

similar to the embedding in the 2-dimensional Euclidean vector space.
Immediately,
Lemma~\ref{lem:3} extends to higher dimensions as follows.

\begin{lemma} \label{lem:4}
    Let $n,m \in V$ and $(n,m) \in E$.
    Then $\Vx_n, \Vx_m \in \sphere_{d-1}$ 
    and $\Vell_{(n,m)} = \langle \Vx_n, \Vx_m \rangle$
    if and only if
    the block matrix
    \begin{linenomath*}
    \begin{equation}    \label{eq:mat_real_d}
        \MQ_{(n,m)}
        \coloneqq
        \begin{bmatrix}
            \MI_d & \Vx_n & \Vx_m \\
            \Vx_n^\tT & 1 & \Vell_{(n,m)} \\
            \Vx_m^\tT & \Vell_{(n,m)} & 1
        \end{bmatrix}
        \in \R^{(d+2) \times (d+2)}
    \end{equation}
    \end{linenomath*}
    is positive semi-definite and has rank $d$. 
\end{lemma}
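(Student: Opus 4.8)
The plan is to repeat, almost verbatim, the arguments behind Lemma~\ref{lem:2} and Lemma~\ref{lem:3}: the block pattern of $\MQ_{(n,m)}$ in \eqref{eq:mat_real_d} is exactly the one treated there, only the size of the identity block and of the vectors $\Vx_n,\Vx_m$ has grown. So I would first handle the forward implication by an explicit rank-one–type factorization and then the converse by a full-rank factorization, exploiting orthogonality of the leading block.

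For the forward direction, assume $\Vx_n,\Vx_m\in\sphere_{d-1}$ and $\Vell_{(n,m)}=\langle\Vx_n,\Vx_m\rangle$. I would set $\MB\coloneqq[\MI_d,\Vx_n,\Vx_m]\in\R^{d\times(d+2)}$ and observe, using $\Vx_n^\tT\Vx_n=\Vx_m^\tT\Vx_m=1$ and $\Vx_n^\tT\Vx_m=\Vell_{(n,m)}$, that $\MQ_{(n,m)}=\MB^\tT\MB$. This immediately gives $\MQ_{(n,m)}\succeq0$, and $\rank(\MQ_{(n,m)})=\rank(\MB)=d$ because the first $d$ columns of $\MB$ already span $\R^d$. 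Conversely, if $\MQ_{(n,m)}\succeq0$ with $\rank(\MQ_{(n,m)})=d$, I would take a full-rank factorization $\MQ_{(n,m)}=\MC^\tT\MC$ with $\MC=[\MC_0,\Vc_n,\Vc_m]$, $\MC_0\in\R^{d\times d}$, $\Vc_n,\Vc_m\in\R^d$ (for instance from the spectral decomposition, retaining the $d$ nonzero eigenvalues). Comparing the $(1,1)$ block yields $\MC_0^\tT\MC_0=\MI_d$, so $\MC_0$ is orthogonal; the remaining blocks give $\Vx_n=\MC_0^\tT\Vc_n$, $\Vx_m=\MC_0^\tT\Vc_m$, $\lVert\Vc_n\rVert=\lVert\Vc_m\rVert=1$ and $\langle\Vc_n,\Vc_m\rangle=\Vell_{(n,m)}$. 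Since orthogonal maps preserve the Euclidean norm and inner product, $\lVert\Vx_n\rVert=\lVert\Vx_m\rVert=1$ and $\langle\Vx_n,\Vx_m\rangle=\langle\Vc_n,\Vc_m\rangle=\Vell_{(n,m)}$.

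An alternative route, more in the spirit of Theorem~\ref{tho:equiv-S1}, is to argue entirely through Proposition~\ref{prop:schur-comp}. The Schur complement of $\MQ_{(n,m)}$ with respect to the invertible block $\MI_d$ is the $2\times2$ matrix
$$
\MQ_{(n,m)}/\MI_d=
\begin{bmatrix}
1-\lVert\Vx_n\rVert^2 & \Vell_{(n,m)}-\langle\Vx_n,\Vx_m\rangle\\
\Vell_{(n,m)}-\langle\Vx_n,\Vx_m\rangle & 1-\lVert\Vx_m\rVert^2
\end{bmatrix},
$$
and the factorization in Proposition~\ref{prop:schur-comp} gives both $\rank(\MQ_{(n,m)})=d+\rank(\MQ_{(n,m)}/\MI_d)$ and the equivalence $\MQ_{(n,m)}\succeq0\iff\MQ_{(n,m)}/\MI_d\succeq0$. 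Hence $\MQ_{(n,m)}$ is positive semi-definite of rank $d$ exactly when its Schur complement is the zero matrix, i.e.\ $\lVert\Vx_n\rVert=\lVert\Vx_m\rVert=1$ and $\Vell_{(n,m)}=\langle\Vx_n,\Vx_m\rangle$. I expect no serious obstacle in either route; the only point that needs genuine attention is the rank bookkeeping — confirming that the factorization in the converse is really $d$-dimensional (equivalently, that the Schur complement contributes precisely the excess rank) — since this is what forces the rank to equal $d$ rather than merely be at most $d+2$.
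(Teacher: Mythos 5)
Your proposal is correct, and your first route (the factorization $\MQ_{(n,m)}=\MB^\tT\MB$ with $\MB=[\MI_d,\Vx_n,\Vx_m]$ for the forward direction, a full-rank factorization with orthogonal leading block for the converse) is exactly the argument the paper intends: Lemma~\ref{lem:4} is stated without proof as the immediate extension of Lemma~\ref{lem:3}, which in turn defers to the proofs of Lemmas~\ref{lem:1} and~\ref{lem:2}, and those are precisely this factorization argument. Your alternative Schur-complement route is also sound and arguably tidier — since $\MI_d\succ 0$, Proposition~\ref{prop:schur-comp} gives both $\MQ_{(n,m)}\succeq 0\iff\MQ_{(n,m)}/\MI_d\succeq 0$ and $\rank(\MQ_{(n,m)})=d+\rank(\MQ_{(n,m)}/\MI_d)$, so the conjunction of positive semi-definiteness and rank $d$ is equivalent to the vanishing of the $2\times 2$ Schur complement, which is exactly the three stated constraints; this handles both implications at once and is the same mechanism the paper deploys later in the proof of Theorem~\ref{tho:equiv-S1}.
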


Incorporating Lemma~\ref{lem:4} and
relaxing the rank-$d$ constraint,
we obtain the new $d$-dimensional generalization:
\begin{linenomath*}
\begin{align}    
&\textbf{simplified relaxed real model}\\[1ex]
    \label{eq:conv-real-tik}
&    \argmin_{\Vx \in (\R^d)^N, \Vell \in \R^M} 
    \FK (\Vx, \Vell) 
    \quad \text{s.t.} \quad 
    \MQ_{(n,m)} \succeq 0
\end{align}
\end{linenomath*}
for all $(n,m) \in E$,
which is again a real-valued, convex optimization problem
and can thus be solved applying standard numerical methods
from convex analysis.
From a theoretical point of view,
the tightness of the simplified relaxed real model
to the original nonconvex problem 
remains an open issue,
i.e.,
having a solution of \eqref{eq:conv-real-tik},
it is unclear 
how to construct a solution of \eqref{eq:class-tik_real_rd}.
From a numerical point of view,
we observe in our experiments
that the calculated solutions of \eqref{eq:conv-real-tik}
immediately solve \eqref{eq:class-tik_real_rd}.

%---------------------------------------------------------------------------------
\section{Generalization to $\SO(3)$-Valued Data} \label{sec:sdc}
%---------------------------------------------------------------------------------

In some applications like electron backscatter tomography,
the measured data naturally lie on the 3d rotation group 
$\SO(3) \coloneqq \{ \MR \in \R^{3,3} : \MR^\tT \MR = \MI_3, \det(\MR)=1 \}$.
The matrices in $\SO(3)$ form a three-dimensional manifold
and can be parameterized using 
the rotation axis $\Vv \in \sphere_2$ and the rotation angle $\alpha \in [-\pi,\pi)$.
More precisely,
the rotation matrix corresponding to $\Vv$ and $\alpha$
is given by
\begin{linenomath*}
\begin{align}
    \label{eq:axis-ang-mat}
    &\MR(\Vv,\alpha) \coloneqq 
    \left[
    \scalebox{1.0}{$\begin{smallmatrix}
        (1 - \cos(\alpha)) \, \Vv_1^2 + \cos(\alpha)
        & (1- \cos(\alpha)) \, \Vv_1 \Vv_2 - \Vv_3 \sin(\alpha)
        & (1 - \cos(\alpha)) \, \Vv_1 \Vv_3 + \Vv_2 \sin(\alpha)
        \\
        (1 - \cos(\alpha)) \, \Vv_2 \Vv_1 + \Vv_3 \sin(\alpha)
        & (1 - \cos(\alpha)) \, \Vv_2^2 + \cos(\alpha)
        & (1 - \cos(\alpha)) \, \Vv_2 \Vv_3 - \Vv_1 \sin(\alpha)
        \\
        (1 - \cos(\alpha)) \, \Vv_1 \Vv_3 - \Vv_2 \sin(\alpha)
        & (1 - \cos(\alpha)) \, \Vv_3 \Vv_2 + \Vv_1 \sin(\alpha)
        & (1 - \cos(\alpha)) \, \Vv_3^2 + \cos(\alpha)
    \end{smallmatrix}$}
    \right].
\end{align}
\end{linenomath*}
Due to $\MR(\Vv,\alpha) = \MR(-\Vv,-\alpha)$,
the rotation angle may be restricted to $\alpha \in [0,\pi)$.
Moreover, $\SO(3)$ can be parameterized by unit quaternions, 
see for instance \cite{graef12},
which allow us to apply the derived methods to denoise $\SO(3)$-valued data.

Each quaternion $z \in \HH$ can be written as
$z \coloneqq z_1 + \I z_2 + \J z_3 + \K z_4$
with $z_1, z_2, z_3, z_4 \in \R$,
where the symbols $\I, \J, \K$ generalize the complex-imaginary unit.
The noncommutative multiplication of two quaternions is defined by the table
\begin{linenomath*}
\begin{align*}
    \I^2 = \J^2 = \K^2 = -1,&
    \quad
    \I \J = - \J \I = \K,\\
    \J \K = - \K \J = \I,&
    \quad
    \K \I = - \I \K = \J.
\end{align*}
\end{linenomath*}
The \emph{conjugate} of a quaternion is given by
$\bar z \coloneqq z_1 - \I z_2 - \J z_3 - \K z_4$,
and its \emph{norm} by
$\lvert z \rvert^2 \coloneqq \bar z z = z \bar z$.
The \emph{real} or \emph{scalar part} of $z$ is denoted by
$\Re[z] \coloneqq z_1$,
and its \emph{imaginary} or \emph{vector part} by
$\Im [z] \coloneqq \I z_2 + \J z_3 + \K z_4$.
Note that, 
different from the complex-imaginary part,
the quaternion-imaginary part contains the symbols $\I, \J, \K$.
The real components to the symbols are henceforth denoted by
$\Im_\I [z] \coloneqq z_2$,
$\Im_\J [z] \coloneqq z_3$,
$\Im_\K [z] \coloneqq z_4$.
The sphere of the unit quaternions is indicated by
$\Hsphere \coloneqq \{ z \in \HH : \lvert z \rvert = 1 \}$.

The action of rotations in $\SO(3)$ can now be identified
with the action of unit quaternions \cite{graef12}.
For this,
we identify the rotation axis $\Vv \in \sphere_2$ and angle $\alpha \in [-\pi,\pi)$
with the unit quaternion
\begin{linenomath*}
\begin{equation}
    \label{eq:quad-mat}
    q(\Vv,\alpha) 
    \coloneqq 
    \cos\bigl(\tfrac{\alpha}{2}\bigr) 
    + \sin\bigl(\tfrac{\alpha}{2}\bigr) \,
    (\I \Vv_1 + \J \Vv_2 + \K \Vv_3).
\end{equation} 
\end{linenomath*}
Notice that the real part of $q(\Vv,\alpha)$ is always nonnegative.
The rotation of $\Vx \in \R^3$ is given by
\begin{linenomath*}
\begin{equation*}
    \MR(\Vv,\alpha) \, \Vx 
    = (\Im_\I[\xi_{\Vx}], \Im_\J[\xi_{\Vx}], \Im_\K[\xi_{\Vx}])^\tT
\end{equation*}
\end{linenomath*}
with the quaternion
\begin{linenomath*}
\begin{equation*}
    \xi_{\Vx} 
    \coloneqq
    q(\Vv,\alpha) \,
    (\I \Vx_1 + \J \Vx_2 + \K \Vx_3) \, 
    \bar q(\Vv, \alpha),
\end{equation*}
\end{linenomath*}
i.e.\ the action of the rotation corresponds to a conjugation in the group 
of the unit quaternions.
Obviously, the unit quaternions $q$ and $-q$ correspond to the same rotation.
More precisely, 
the unit quaternions form a double cover of $\operatorname{SO}(3)$
by
\begin{linenomath*}
\begin{equation*}
    \Hsphere / \{-1,1\} \cong \operatorname{SO}(3),
\end{equation*}
\end{linenomath*}
where $q_1,q_2 \in \Hsphere$ are equivalent if $q_1 \bar q_2 \in \{-1,1\}$,
see \cite[Ch~III, §~10]{Bre93}.
The other way round,
the unit quaternion $q \in \Hsphere/\{-1,1\}$ corresponds to the rotation matrix
\begin{linenomath*}
\begin{align*}
\MR(q) \coloneqq
\left[\scalebox{1.0}{$\begin{smallmatrix}
    1 - 2 \Im_\J[q]^2 - 2 \Im_\K[q]^2 
    & 2 \Im_\I[q] \Im_\J[q] - 2 \Im_\K[q] \Re[q] 
    & 2 \Im_\I[q] \Im_\K[q] + 2 \Im_\J[q] \Re[q]
    \\
    2 \Im_\I[q] \Im_\J[q] + 2 \Im_\K[q] \Re[q] 
    & 1 - 2 \Im_\I[q]^2 - 2 \Im_\K[q]^2 
    & 2 \Im_\J[q] \Im_\K[q] - 2 \Im_\I[q] \Re[q]
    \\
    2 \Im_\I[q] \Im_\K[q] - 2 \Im_\J[q] \Re[q] 
    & 2 \Im_\J[q] \Im_\K[q] + 2 \Im_\I[q] \Re[q] 
    & 1 - 2 \Im_\I[q]^2 - 2 \Im_\J[q]^2
\end{smallmatrix}$}\right].
\end{align*}
\end{linenomath*}

Due to the parametrization of $\SO(3)$ by the unit quaternions,
we are especially interested in the generalization
of the complex-valued model \eqref{eq:class-tik} 
and its convex relaxation \eqref{eq:conv-tik} to the hypercomplex numbers $\HH$.
The aim is again to recover the signal $x \coloneqq (x_n)_{n \in V} \in \Hsphere^N$ on $G$
from noisy measurements $y \coloneqq (y_n)_{n \in V} \in \Hsphere^N$. 
Similarly to the complex-valued setting,
we consider the nonconvex problem
\begin{linenomath*}
\begin{equation}
  \label{eq:mod-quad-sph}
  \argmin_{x \in \Hsphere^N}
  \sum_{n \in V}
  \frac{w_n}{2} \,
  \lvert x_n - y_n \rvert^2
  +
  \smashoperator{\sum_{(n,m) \in E}}
  \frac{\lambda_{(n,m)}}{2} \,
  \lvert x_n - x_m \rvert^2,
\end{equation}
\end{linenomath*}
which is equivalent to
\begin{linenomath*}
\begin{equation}
  \label{eq:mod-quad-lin-ob}
  \argmin_{x \in \Hsphere^N, r \in \HH^M}
  \FJ(x,r)
  \quad\text{s.t.}\quad
  r_{(n,m)} =  \bar x_m x_n 
\end{equation}
\end{linenomath*}
for all $(n,m) \in E$ with
\begin{linenomath*}
\begin{equation*}
  \FJ(x,r)
  \coloneqq
  -
  \sum_{n \in V}
  w_n \, \Re[x_n \bar y_n]
  - \smashoperator{\sum_{(n,m) \in E}}
  \lambda_{(n,m)} \, \Re[r_{(n,m)}].
\end{equation*}
\end{linenomath*}
Since the quaternions are noncommutative, 
the order of multiplication in the constraint $r_{(n,m)} = \bar x_m x_n$ matters.

Many concepts of linear algebra generalize to quaternion matrices \cite{Zha97}.
A quaternion matrix $A \in \HH^{d \times d}$ 
is called \emph{Hermitian}
if $A = A^\tH$.
Such a matrix $A$
is positive semi-definite
if $z^\tH A z \ge 0$ for all $z \in \HH^d$.
Further, $A$ has rank one 
if all columns are \emph{right} linear dependent 
and all rows are \emph{left} linear dependent. 
Here the term \emph{left} and \emph{right}  indicate from which side
the quaternion coefficients are multiplied. 
In the Hermitian case, $A$ has rank one
if there exists  $z\in\HH^d$ such that $A = z z^\tH$.
Using a similar argumentation as in Lemma~\ref{lem:1},
we can rewrite the nonconvex constraints in terms of quaternion matrices.

\begin{lemma}
    \label{lem:8}
  Let $n,m \in V$ and $(n,m) \in E$.
  Then $x_n, x_m \in \Hsphere$ and $r_{(n,m)} = \bar x_m x_n$
  if and only if
  \begin{linenomath*}
  \begin{equation}
    \label{eq:P-quad}
    P_{(n,m)}
    \coloneqq
    \begin{bmatrix}
      1 & x_n & x_m \\
      \bar x_n & 1 & \bar r_{(n,m)} \\
      \bar x_m & r_{(n,m)} & 1
    \end{bmatrix}
    \in \HH^{3 \times 3}
  \end{equation}
  \end{linenomath*}
  is positive semi-definite and has rank one.
\end{lemma}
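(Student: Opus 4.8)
The plan is to mimic the proof of Lemma~\ref{lem:1} almost verbatim, carrying along the noncommutativity of $\HH$ carefully. The forward direction is immediate: assuming $x_n, x_m \in \Hsphere$ and $r_{(n,m)} = \bar x_m x_n$, I would set $v \coloneqq (1, x_n, x_m)^\tH \in \HH^3$ — that is, the column vector with entries $1, \bar x_n, \bar x_m$ — and check directly that $v v^\tH$ reproduces $P_{(n,m)}$. Here one uses $\lvert x_n \rvert^2 = \bar x_n x_n = 1$, $\lvert x_m \rvert^2 = 1$, and the identity $x_m \bar x_n = \overline{x_n \bar x_m} = \overline{r_{(n,m)}}$ together with $x_n \bar x_m = r_{(n,m)}$; the order matters, so I would write out the off-diagonal entries explicitly to confirm the placement of conjugates agrees with \eqref{eq:P-quad}. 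A matrix of the form $v v^\tH$ is Hermitian, positive semi-definite (since $z^\tH v v^\tH z = \lvert v^\tH z \rvert^2 \ge 0$ for all $z \in \HH^3$), and of rank one in the sense defined just above the lemma.

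For the converse, I would invoke the stated characterization of Hermitian rank-one quaternion matrices: if $P_{(n,m)} \succeq 0$ is Hermitian of rank one, then $P_{(n,m)} = z z^\tH$ for some $z = (a,b,c)^\tT \in \HH^3$. Writing out $z z^\tH$ entrywise gives diagonal entries $\lvert a \rvert^2, \lvert b \rvert^2, \lvert c \rvert^2$, which must equal $1$, so $a, b, c \in \Hsphere$. Comparing the $(1,2)$ entry gives $x_n = a \bar b$, hence $\lvert x_n \rvert = \lvert a \rvert \lvert b \rvert = 1$; similarly $x_m = a \bar c$ with $\lvert x_m \rvert = 1$. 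Finally the $(3,2)$ entry gives $r_{(n,m)} = c \bar b$, and I would rewrite $c \bar b = c \bar a a \bar b = \overline{(a \bar c)} \, (a \bar b)$; since $a \bar c = x_m$ and $\bar a = \overline{a}$ with $a\in\Hsphere$ means $\bar a = a^{-1}$, this yields $r_{(n,m)} = \bar x_m x_n$, matching the prescribed order.

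The only genuine subtlety — and the step I expect to need the most care — is the rank-one structure theorem for Hermitian positive semi-definite quaternion matrices: over a noncommutative ring the usual eigendecomposition argument does not apply verbatim, so I would either cite the relevant spectral result for quaternion Hermitian matrices (e.g.\ from \cite{Zha97}) or argue directly. A clean direct argument: positive semi-definiteness forces every principal $2\times2$ minor condition, in particular if some diagonal entry, say the first, is nonzero, then the first row and column determine the rest via the rank-one condition — each other row is a left multiple of the first. One then reads off the multipliers and uses the diagonal normalization $\lvert a\rvert = \lvert b\rvert = \lvert c\rvert = 1$. I would also note in passing that the order of the factors in $r_{(n,m)} = \bar x_m x_n$ is precisely what makes the bookkeeping of conjugates consistent between \eqref{eq:mod-quad-lin-ob} and \eqref{eq:P-quad}, as already flagged in the text. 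Apart from this structural input, the computation is routine and parallels Lemma~\ref{lem:1} directly.
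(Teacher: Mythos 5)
Your approach is exactly the paper's: the paper gives no separate proof of Lemma~\ref{lem:8} but simply invokes ``a similar argumentation as in Lemma~\ref{lem:1}'', which is what you carry out, including the necessary appeal to the factorization $A = zz^\tH$ of Hermitian positive semi-definite rank-one quaternion matrices stated just before the lemma. One order slip in your forward direction should be fixed: with $v=(1,\bar x_n,\bar x_m)^\tT$ the relevant off-diagonal entries of $vv^\tH$ are $(vv^\tH)_{32}=\bar x_m x_n=r_{(n,m)}$ and $(vv^\tH)_{23}=\bar x_n x_m=\overline{\bar x_m x_n}=\bar r_{(n,m)}$, not $x_n\bar x_m=r_{(n,m)}$ and $x_m\bar x_n=\bar r_{(n,m)}$ as you wrote --- for noncommuting quaternions these differ, and your version contradicts the lemma's own hypothesis $r_{(n,m)}=\bar x_m x_n$. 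Your converse computation ($r_{(n,m)}=c\bar b=c\bar a\,a\bar b=\overline{a\bar c}\,(a\bar b)=\bar x_m x_n$) gets the order right, so the argument goes through once the forward-direction identities are corrected.
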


Incorporating Lemma~\ref{lem:8} into \eqref{eq:mod-quad-lin-ob},
and relaxing the rank-one constraint,
we obtain the new quaternion relaxation:
\begin{linenomath*}
\begin{align}
  \label{eq:conv-quad-tik}
  & \textbf{relaxed quaternion model}
  \notag \\
  &\argmin_{x \in \HH^N, r \in \HH^M}
  \FJ(x,r)
  \quad\text{s.t.}\quad
  P_{(n,m)} \succeq 0
\end{align}
\end{linenomath*}
for all $(n,m) \in E$.
This is the generalization of Condat's relaxed complex model 
\eqref{eq:conv-tik} for quaternion-valued data.
Using the approach as for complex data, 
we identify the quaternion $x_n \in \mathbb H$ with the vector 
$\Vx_n \coloneqq (\Re[x_n], \Im_\I[x_n], \Im_\J[x_n], \Im_\K [x_n])^\tT \in \mathbb R^4$
and  propose to solve instead of \eqref{eq:conv-quad-tik} 
again our new simplified relaxed real model \eqref{eq:conv-real-tik} with $d=4$,
i.e.\ we propose to solve the convex formulation:
\begin{linenomath*}
\begin{align}  
&\textbf{simplified relaxed real model}
\notag
\\[1ex]
    &\argmin_{\Vx \in (\R^4)^N, \Vell \in \R^M} 
    \FK (\Vx, \Vell) 
    \quad \text{s.t.} \quad 
    \MQ_{(n,m)} \succeq 0
    \label{eq:conv-real-tik-d4}
\end{align}
\end{linenomath*}
for all $(n,m) \in E$, where
\begin{linenomath*}
\begin{equation}    \label{eq:mat_real_d4}
        \MQ_{(n,m)}
        \coloneqq
        \begin{bmatrix}
            \MI_4 & \Vx_n & \Vx_m \\
            \Vx_n^\tT & 1 & \Vell_{(n,m)} \\
            \Vx_m^\tT & \Vell_{(n,m)} & 1
        \end{bmatrix}
        \in \R^{6 \times 6}.
\end{equation}
\end{linenomath*}
\vspace{0.2cm}
    
Note that this is exactly the simplified relaxed real model for $\sphere_3$-valued data.
Similarly to Corollary~\ref{cor:equiv-S1}, 
the following theorem establishes that
the relaxed quaternion model \eqref{eq:conv-quad-tik} is equivalent
to the simplified relaxed real one \eqref{eq:conv-real-tik-d4}. 

\begin{theorem} \label{the:equiv-quad}
    Problems 
    \eqref{eq:conv-quad-tik} and \eqref{eq:conv-real-tik-d4}
    are equivalent in the following sense:
    \begin{enumerate}[(i)]
        \item  
        If $(\hat x, \hat r)$ solves \eqref{eq:conv-quad-tik}, then
        $(\hat{\Vx}, \Re[\hat r])$
        solves \eqref{eq:conv-real-tik-d4}.
        \item 
        If $(\tilde{\Vx}, \tilde{\Vell})$
        solves 
        \eqref{eq:conv-real-tik-d4}, then
        $(\tilde x, \tilde r)$
        with 
        $\tilde r_{(n,m)} 
        = \tilde \Vell_{(n,m)} 
        + \Im[\overline{\tilde x}_m \tilde x_n]$
        solves \eqref{eq:conv-quad-tik}.
      \end{enumerate}
\end{theorem}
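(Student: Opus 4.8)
The plan is to transfer the proof of Theorem~\ref{tho:equiv-S1} to the quaternion setting, using the $4\times 4$ real representation of $\HH$ in place of the $2\times 2$ real representation of $\C$. First I would fix the dictionary: identify $z\in\HH$ with the vector $\Vz=(\Re[z],\Im_\I[z],\Im_\J[z],\Im_\K[z])^\tT\in\R^4$ and with the matrix $\Mquater(z)\in\R^{4\times 4}$ of left multiplication $w\mapsto zw$. This map is a faithful $\ast$-homomorphism, i.e.\ $\Mquater(z_1z_2)=\Mquater(z_1)\Mquater(z_2)$, $\Mquater(\bar z)=\Mquater(z)^\tT$ and $\Mquater(z)^\tT\Mquater(z)=\lvert z\rvert^2\MI_4$ (see \cite{Zha97}), so that $\bar x_m x_n$ corresponds to $\Mquater(x_m)^\tT\Vx_n$ and the constraint $r_{(n,m)}=\bar x_m x_n$ of \eqref{eq:mod-quad-lin-ob} reads $\Vr_{(n,m)}=\Mquater(x_m)^\tT\Vx_n$. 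Because the scalar part of a quaternion product is the Euclidean inner product of the associated vectors, $\Re[x_n\bar y_n]=\langle\Vx_n,\Vy_n\rangle$ and $\Re[\bar x_m x_n]=\langle\Vx_n,\Vx_m\rangle$; hence $\FJ$ in \eqref{eq:conv-quad-tik} and $\FK$ in \eqref{eq:conv-real-tik-d4} take the same value on any pair related by $\Vell_{(n,m)}=\Re[r_{(n,m)}]$.

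Next I would push the semidefiniteness constraint through this dictionary. Applying $\Mquater$ entrywise to the Hermitian quaternion matrix $P_{(n,m)}$ of \eqref{eq:P-quad} gives a symmetric $\tilde P_{(n,m)}\in\R^{12\times 12}$ with $\MI_4$ in the top-left block, and $P_{(n,m)}\succeq 0$ if and only if $\tilde P_{(n,m)}\succeq 0$ (the quaternion analogue of the elementary ``$\MP_{(n,m)}\succeq 0$ iff $P_{(n,m)}\succeq 0$'' step of the complex case, since $z^\tH P_{(n,m)}z=\Vz^\tT\tilde P_{(n,m)}\Vz$ under $z\leftrightarrow\Vz$). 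A short computation using $\Mquater(x_n)^\tT\Mquater(x_m)=\Mquater(\bar x_m x_n)^\tT$ shows that the Schur complement with respect to that $\MI_4$ block is
\begin{linenomath*}
\begin{equation*}
\tilde P_{(n,m)}/\MI_4
=
\begin{bmatrix}
(1-\lVert\Vx_n\rVert^2)\,\MI_4 & \Mquater(s_{(n,m)})^\tT\\
\Mquater(s_{(n,m)}) & (1-\lVert\Vx_m\rVert^2)\,\MI_4
\end{bmatrix},
\qquad
s_{(n,m)}\coloneqq r_{(n,m)}-\bar x_m x_n.
\end{equation*}
\end{linenomath*}
Since $\Mquater(s_{(n,m)})\Mquater(s_{(n,m)})^\tT=\lvert s_{(n,m)}\rvert^2\MI_4$, one further Schur complement step (Proposition~\ref{prop:schur-comp}, with the boundary case $\lVert\Vx_n\rVert=1$ checked directly; equivalently, $\tilde P_{(n,m)}/\MI_4$ is the real image of a $2\times 2$ Hermitian quaternion matrix and one uses the elementary $2\times 2$ criterion) collapses this to the scalar condition: $\tilde P_{(n,m)}\succeq 0$ iff $\lVert\Vx_n\rVert\le 1$, $\lVert\Vx_m\rVert\le 1$ and $\lvert s_{(n,m)}\rvert^2\le(1-\lVert\Vx_n\rVert^2)(1-\lVert\Vx_m\rVert^2)$. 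In parallel, $\MQ_{(n,m)}/\MI_4$ has the same form as the $2\times 2$ matrix \eqref{schur_1}, so $\MQ_{(n,m)}\succeq 0$ iff $\lVert\Vx_n\rVert\le 1$, $\lVert\Vx_m\rVert\le 1$ and $(\Vell_{(n,m)}-\langle\Vx_n,\Vx_m\rangle)^2\le(1-\lVert\Vx_n\rVert^2)(1-\lVert\Vx_m\rVert^2)$. Finally, $\lvert s_{(n,m)}\rvert^2=(\Re[r_{(n,m)}]-\langle\Vx_n,\Vx_m\rangle)^2+\lvert\Im[r_{(n,m)}-\bar x_m x_n]\rvert^2$ makes the two feasibility conditions directly comparable.

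From here the argument is the sandwich of Theorem~\ref{tho:equiv-S1}. For (i): if $(\hat x,\hat r)$ solves \eqref{eq:conv-quad-tik}, then dropping the nonnegative term $\lvert\Im[\hat r_{(n,m)}-\overline{\hat x}_m\hat x_n]\rvert^2$ shows $(\hat\Vx,\Re[\hat r])$ is feasible for \eqref{eq:conv-real-tik-d4}, with $\FK(\hat\Vx,\Re[\hat r])=\FJ(\hat x,\hat r)$. For (ii): if $(\tilde\Vx,\tilde\Vell)$ solves \eqref{eq:conv-real-tik-d4}, then $\tilde r_{(n,m)}=\tilde\Vell_{(n,m)}+\Im[\overline{\tilde x}_m\tilde x_n]$ satisfies $\Im[\tilde r_{(n,m)}-\overline{\tilde x}_m\tilde x_n]=0$, so $\lvert s_{(n,m)}\rvert^2=(\tilde\Vell_{(n,m)}-\langle\tilde\Vx_n,\tilde\Vx_m\rangle)^2$ and $(\tilde x,\tilde r)$ is feasible for \eqref{eq:conv-quad-tik}, with $\FJ(\tilde x,\tilde r)=\FK(\tilde\Vx,\tilde\Vell)$. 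Then
\begin{linenomath*}
$$
\FK(\tilde\Vx,\tilde\Vell)\le\FK(\hat\Vx,\Re[\hat r])=\FJ(\hat x,\hat r)\le\FJ(\tilde x,\tilde r)=\FK(\tilde\Vx,\tilde\Vell)
$$
\end{linenomath*}
forces equality throughout, which is exactly (i) and (ii).

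I expect the only genuine obstacle to be the algebraic set-up of the first two steps: handling the noncommutativity correctly — keeping the order $\bar x_m x_n$ of \eqref{eq:mod-quad-lin-ob} fixed everywhere — and establishing the quaternion linear-algebra facts that the complex case did not need, namely that $\Mquater$ is a faithful $\ast$-representation (so that semidefiniteness transfers between $P_{(n,m)}$ and $\tilde P_{(n,m)}$) and that $\Mquater(s)\Mquater(s)^\tT=\lvert s\rvert^2\MI_4$ (so that the $8\times 8$ Schur complement collapses to a scalar inequality). An equivalent route that stays entirely inside $\HH$ is to take the quaternion Schur complement of $P_{(n,m)}$ with respect to its $(1,1)$-entry and invoke the elementary criterion $\left[\begin{smallmatrix}\alpha&\bar\beta\\\beta&\gamma\end{smallmatrix}\right]\succeq 0\iff\alpha,\gamma\ge 0,\ \alpha\gamma\ge\lvert\beta\rvert^2$ for $2\times 2$ Hermitian quaternion matrices, which yields the same two feasibility inequalities at once.
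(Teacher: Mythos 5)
Your proof is correct, and its overall skeleton matches the paper's: pass to the real $12\times 12$ representation $\MP_{(n,m)}$ of $P_{(n,m)}$, take the Schur complement with respect to the leading $\MI_4$, compare feasibility with that of $\MQ_{(n,m)}$, and close with the same sandwich inequality. The difference lies in how the feasibility comparison is extracted from the $8\times 8$ Schur complement. The paper permutes rows and columns of $\MP_{(n,m)}$ by $(1,2,3,4,5,9,6,10,7,11,8,12)$ and writes out the resulting Schur complement explicitly (Table~\ref{tab:bigmatrix}c), observing that $\MQ_{(n,m)}/\MI_4$ sits in the diagonal $2\times 2$ blocks (giving direction (i) via principal submatrices) and that the off-diagonal blocks vanish exactly for the choice $\tilde r_{(n,m)}=\tilde\Vell_{(n,m)}+\Im[\overline{\tilde x}_m\tilde x_n]$ (giving direction (ii) via block-diagonality). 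You instead keep the natural block structure, use that $\Mquater$ is a $\ast$-homomorphism to write the Schur complement as $\bigl[\begin{smallmatrix}(1-\lVert\Vx_n\rVert^2)\MI_4 & \Mquater(s)^\tT\\ \Mquater(s) & (1-\lVert\Vx_m\rVert^2)\MI_4\end{smallmatrix}\bigr]$ with $s=r_{(n,m)}-\bar x_m x_n$, and then exploit $\Mquater(s)\Mquater(s)^\tT=\lvert s\rvert^2\MI_4$ to collapse both feasibility conditions to the scalar inequalities $\lvert s\rvert^2\le(1-\lVert\Vx_n\rVert^2)(1-\lVert\Vx_m\rVert^2)$ and $(\Re[s])^2\le(1-\lVert\Vx_n\rVert^2)(1-\lVert\Vx_m\rVert^2)$, which differ only by the nonnegative term $\lvert\Im[s]\rvert^2$. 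This buys a coordinate-free argument that avoids the explicit $12\times 12$ tables and makes transparent why the specific choice of $\tilde r$ works ($\Im[s]=0$); what it costs is a little extra care with the degenerate cases $\lVert\Vx_n\rVert=1$ or $\lVert\Vx_m\rVert=1$ where Proposition~\ref{prop:schur-comp} does not apply verbatim (you flag this), and a precise citation for the fact that positive semi-definiteness of a Hermitian quaternion matrix is equivalent to that of its real adjoint representation, which the paper also uses but states without the quadratic-form identity you sketch. Both routes are sound.
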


\begin{proof}
  See Appendix.
\end{proof}

Returning to 3d rotations,
we want to employ the simplified relaxed real model \eqref{eq:conv-real-tik-d4}
to denoise $\SO(3)$-valued signals.
More precisely,
having given perturbed data $(\MY_n)_{n\in V} \in (\SO(3))^N$,
we are looking for a smoothed signal $(\MX_n)_{n\in V} \in (\SO(3))^N$.
As discussed above,
the central idea is to use the double cover $\SO(3) \cong \Hsphere / \{-1,1\}$
to lift the given data $(\MY_n)_{n \in V}$ 
to a quaternion signal $(y_n)_{n\in V} \in \Hsphere^N$.
The main issue is here which sign of the corresponding quaternion should we choose?
Assuming that $(\MY_n)_{n \in V}$ originates from a \enquote{smooth} signal on $G$,
we expect the Frobenius norm
\begin{linenomath*}
\begin{equation}
    \lVert\MY_n - \MY_m\rVert_\tF^2  = 8(1-\Re[y_n \bar y_m ]^2)
\end{equation}
\end{linenomath*}
to be relatively small for all $(n,m) \in E$. 
The identity can be found in \cite[Eq. (27)]{HU2009} for instance.
Transferring the smoothness assumption,
we thus assume that $\Re[ y_n \bar  y_m ] \gg 0$
for all $(n,m) \in E$.
Starting from an arbitrary lifting of $\MY_1$,
we lift the remaining data $\MY_n$ such that $\Re[y_n \bar y_m] \ge 0$
for $(n,m) \in E$.
This lifting process is always successful
for any signal on a line-graph or tree.
During our numerical simulations,
we do not face any inconsistencies in
finding a global lifting
on well-connected graphs 
like the image graph.
Using the simplified real model \eqref{eq:conv-real-tik-d4}
and Theorem~\ref{the:equiv-quad},
we determine a smoothed signal $(x_n)_{n\in V} \in \Hsphere^N$,
which is retracted by $\MX_n \coloneqq \MR(x_n)$.

\begin{remark}[$\sphere_2$-Valued Data]\label{rem_1}
    Note that purely imaginary quaternions, 
    i.e.\ $z \in \HH$ with $\Re[z]=0$,
    can be identified with three-dimensional vectors.
    Therefore, 
    the restriction of $\Hsphere$ 
    to the purely imaginary quaternions
    can be identified with $\sphere_2$.
    A variation of Lemma~\ref{lem:8} shows that
    $x_n,x_m \in \Hsphere$ with $\Re[x_n] = \Re[x_m] = 0$ 
    and $r_{(n,m)}=\bar x_m x_n$
    if and only if
    \begin{linenomath*}
    \begin{equation}
        \label{eq:P-pure-quad}
        P'_{(n,m)}
        \coloneqq
        \begin{bmatrix}
          1 & \Im[x_n] & \Im[x_m] \\
          -\Im[x_n] & 1 & \bar r_{(n,m)} \\
          -\Im[x_m] & r_{(n,m)} & 1
        \end{bmatrix}
        \in \HH^{3 \times 3}
    \end{equation}
    \end{linenomath*}
    is positive semi-definite with rank one. 
    The corresponding \emph{relaxed purely quaternion model}
    \begin{linenomath*}
    \begin{equation}
      \label{eq:conv-pure-quad-tik}
      \argmin_{x \in \HH^N, r \in \HH^M}
      \FJ(x,r)
      \quad\text{s.t.}\quad
      P'_{(n,m)} \succeq 0
      \quad\text{for all} \;
      (n,m) \in E
    \end{equation}
    \end{linenomath*}
    becomes completely independent of the real parts $\Re[x_n]$,
    $n \in V$. 
    Setting these real parts to zero,
    we can use the relaxed quaternion problem for denoising $\sphere_2$-valued data.
    Furthermore,
    in line with Theorem~\ref{the:equiv-quad},
    the relaxed purely quaternion problem
    coincides with model \eqref{eq:conv-real-tik} for $d=3$.
    A similar approach has also been proposed by Condat 
    in \cite{condat_3D}.
\end{remark}

\begin{remark}[Octonion-Valued Data]\label{rem_3}
  Replacing the quaternions by octonions or 
  more general hypercomplex numbers,
  we may generalize \eqref{eq:conv-quad-tik} to more general spheres.
  Since the octonions are nonassociative,
  there exists however no real-valued matrix representation,
  which preserve addition and multiplication.
  Therefore,
  Lemma~\ref{lem:2} will not carry over to octonions,
  and the relation between
  the resulting relaxed octonion model
  and our simplified relaxed real model with $d=8$ proposed in Section~\ref{sec:sd}
  remains unclear.
\end{remark}

%--------------------------------------------
\section{Algorithm}\label{sec:algs}
%--------------------------------------------
For numerical simulations,
we solve the derived relaxed minimization problems by applying
the Alternating Directions Methods of Multipliers (ADMM) \cite{BPC+10,PB13}.
Exemplary, we consider the simplified relaxed 
$d$-dimensional real model \eqref{eq:conv-real-tik},
which can be rewritten as
\begin{linenomath*}
\begin{align}   
    \label{eq:num_real}
    \smashoperator[l]{\argmin_{\substack{\Vx \in (\R^d)^N,
    \Vell \in \R^M, \\
    \MU \in (\R^{(d+2) \times (d+2)})^M}}} 
    \!\!\!\!\FK(\Vx,\Vell) + \FG(\MU) 
    \quad\text{s.t.}\quad
    \FQ(\Vx,\Vell) = \MU,
\end{align}
\end{linenomath*}
where the linear map $\FQ \colon (\R^{d})^N \times \R^M 
\to (\R^{(d+2) \times (d+2)})^M$ is given by
\begin{linenomath*}
\begin{align}
    \FQ(\Vx,\Vell) 
    \coloneqq
    (\MQ_{(n,m)} - \MI_{d+2})_{(n,m) \in E},
\end{align}
\end{linenomath*}
and the positive semi-definite constraint is encoded in
\begin{linenomath*}
\begin{align}
    \FG(\MU) 
    \coloneqq
    \sum_{(n,m)\in E} \iota_{\cC}(\MU_{(n,m)}),
\end{align}
\end{linenomath*}
where $\iota_{\cC}$ denotes the \emph{indicator function} 
that is $0$ on $\cC$ and $+\infty$ otherwise,
and where
\begin{linenomath*}
$$
\cC \coloneqq \{ \MA \in \R^{(d+2)\times (d+2)} 
: \MA = \MA^\tT, \MA \succeq - \MI_{d+2} \}.
$$
\end{linenomath*}
For \eqref{eq:num_real} and $\rho > 0$,
ADMM reads as %\eqref{eq:admm:1}--\eqref{eq:admm:3}, top of the page,
%\begin{figure*}[!t]
%\normalsize
%\setcounter{tempequation}{\value{equation}}
%\begin{subequations}
\begin{linenomath*}
\begin{align}
    (\Vx^{(k+1)}, \Vell^{(k+1)}) & \coloneqq 
    \argmin_{\Vx \in (\R^d)^N, \Vell \in \R^M}
    \FK(\Vx, \Vell)
    + \frac{\rho}{2} \, \lVert \FQ(\Vx, \Vell) - \MU^{(k)} + \MZ^{(k)}\rVert^2,
    \label{eq:admm:1}
    \\
    \MU^{(k+1)} & \coloneqq 
    \argmin_{\MU \in (\R^{(d+2) \times (d+2)})^M}\!\!\!\!
    \FG(\MU) 
    + \frac{\rho}{2} \,
    \lVert \FQ(\Vx^{(k+1)}, \Vell^{(k+1)}) - \MU + \MZ^{(k)} \rVert^2,
    \label{eq:admm:2}
    \\
    \MZ^{(k+1)} & \coloneqq
    \MZ^{(k)} + \FQ(\Vx^{(k+1)}, \Vell^{(k+1)}) - \MU^{(k+1)},
    \label{eq:admm:3}
\end{align}
\end{linenomath*}
%\end{subequations}
%\hrulefill
%\end{figure*}
where $\lVert \MU \rVert^2 \coloneqq \sum_{(n,m) \in E} \lVert \MU_{(n,m)} \rVert^2_\tF$
and $\lVert \cdot \rVert_\tF$ denotes the \emph{Frobenius norm}, see also
\cite{BurSawSte17}.
Note that we could alternatively define $\mathcal Q$ 
as a mapping to the linear subspace of symmetric matrices in 
$\mathbb R^{ (d+2) \times (d+2) }$.
It remains to give explicit formulas for the first and second update step.

\begin{theorem}
\label{tho:problemI}
    For $\rho > 0$,
    the solution of \eqref{eq:admm:1} is given by
    \begin{linenomath*}
    \begin{align*}
        \Vx_{n}^{(k+1)}
        &= 
        \frac{1}{2 \, \nu_n}
        \biggl(
        (\FQ_{\Vx}^*(\MU^{(k)} - \MZ^{(k)}))_{n}
        + \frac{1}{\rho} \, w_n \Vy_{n}
        \biggr),
        \\
        \Vell_{(n,m)}^{(k+1)}
        &= 
        \frac{1}{2}
        \biggl(
        (\FQ_{\Vell}^*(\MU^{(k)} - \MZ^{(k)}))_{(n,m)}
        + \frac{1}{\rho} \, \lambda_{(n,m)}
        \biggr),
    \end{align*}
    \end{linenomath*}
    where 
    $\nu_n 
    \coloneqq 
    \lvert \{ (n,m) \in E \}\rvert
    + \lvert \{ (m,n) \in E \}\rvert$
    counts the edges starting or ending in $n$, and the restrictions
    $\FQ_{\Vx}^* \coloneqq \FQ^*|_{(\R^{d})^N}$ and
$\FQ_{\Vell}^* \coloneqq \FQ^*|_{\R^{M}}$ of 
the adjoint operator $\FQ^*$ with respect to the component spaces
are given for $n \in V$ and $i=1,\dots, d$ by
\begin{linenomath*}
\begin{align}
    \hspace{-0.6cm}\bigl((\FQ_{\Vx}^* (\MU))_{n}\bigr)_i \!
    &=\!
    \Bigl[
    \smashoperator[r]{\sum_{(n,m) \in E}} \,
    (\MU_{(n,m)})_{i,d+1}
    \!+\! 
    (\MU_{(n,m)})_{d+1,i}
    \Bigr] +
    %& \hspace{0.4cm} +\!
    \Bigl[
    \smashoperator[r]{\sum_{(m,n) \in E}} \,
    (\MU_{(n,m)})_{i,d+2}
    \!+\! 
    (\MU_{(n,m)})_{d+2,i}
    \Bigr]  \label{adj_x}
\end{align}
\end{linenomath*}
and for $(n,m) \in E$ by
\begin{linenomath*}
\begin{equation}\label{adj_l}
    (\FQ_{\Vell}^* (\MU))_{(n,m)}
    = (\MU_{(n,m)})_{d+1,d+2} + (\MU_{(n,m)})_{d+2,d+1}.
\end{equation}
\end{linenomath*}
\end{theorem}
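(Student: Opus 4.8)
The plan is to solve \eqref{eq:admm:1} directly by first-order optimality, exploiting that it is an unconstrained, strongly convex quadratic program. With $\MV \coloneqq \MU^{(k)} - \MZ^{(k)}$, the objective $\FK(\Vx,\Vell) + \tfrac{\rho}{2}\lVert\FQ(\Vx,\Vell)-\MV\rVert^2$ is smooth; $\FK$ is linear, and the quadratic part has Hessian $\rho\,\FQ^*\FQ$. Hence, once $\FQ$ is seen to be injective, the problem has a unique minimizer, characterized by $\nabla\FK + \rho\,\FQ^*(\FQ(\Vx,\Vell)-\MV) = 0$, that is $\rho\,\FQ^*\FQ(\Vx,\Vell) = \rho\,\FQ^*\MV + \bigl((w_n\Vy_n)_{n\in V}, (\lambda_{(n,m)})_{(n,m)\in E}\bigr)$, using $\nabla_{\Vx_n}\FK = -w_n\Vy_n$ and $\nabla_{\Vell_{(n,m)}}\FK = -\lambda_{(n,m)}$.

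The crucial observation is that $\FQ^*\FQ$ is diagonal. Since $\MQ_{(n,m)}-\MI_{d+2}$ has vanishing diagonal blocks, its only nonzero entries are the copies of $\Vx_n$ in positions $(i,d{+}1),(d{+}1,i)$, the copies of $\Vx_m$ in positions $(i,d{+}2),(d{+}2,i)$ for $i=1,\dots,d$, and $\Vell_{(n,m)}$ in positions $(d{+}1,d{+}2),(d{+}2,d{+}1)$. Therefore $\lVert\MQ_{(n,m)}-\MI_{d+2}\rVert_\tF^2 = 2\lVert\Vx_n\rVert^2 + 2\lVert\Vx_m\rVert^2 + 2\,\Vell_{(n,m)}^2$, and summing over $E$ yields $\lVert\FQ(\Vx,\Vell)\rVert^2 = 2\sum_{n\in V}\nu_n\lVert\Vx_n\rVert^2 + 2\sum_{(n,m)\in E}\Vell_{(n,m)}^2$ with $\nu_n$ the edge-count from the statement. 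Thus $\FQ^*\FQ$ maps $\Vx_n\mapsto 2\nu_n\Vx_n$ and $\Vell_{(n,m)}\mapsto 2\,\Vell_{(n,m)}$; since every vertex of a connected graph with $N\ge 2$ has degree $\nu_n\ge 1$, we get $\FQ^*\FQ\succ 0$, confirming injectivity and strong convexity. Inserting this diagonal form into the optimality condition decouples it into $2\rho\nu_n\Vx_n = \rho(\FQ_{\Vx}^*\MV)_n + w_n\Vy_n$ and $2\rho\,\Vell_{(n,m)} = \rho(\FQ_{\Vell}^*\MV)_{(n,m)} + \lambda_{(n,m)}$, which rearrange to exactly the claimed formulas with $\MV = \MU^{(k)}-\MZ^{(k)}$.

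It then remains to verify the adjoint formulas \eqref{adj_x} and \eqref{adj_l}, which I would obtain from $\langle\FQ(\Vx,\Vell),\MU\rangle = \langle(\Vx,\Vell),\FQ^*\MU\rangle$ (Frobenius inner product on the matrix side; one may equivalently view $\FQ$ as mapping into the symmetric matrices, which does not change the adjoint). Expanding $\langle\FQ(\Vx,\Vell),\MU\rangle = \sum_{(n,m)\in E}\langle\MQ_{(n,m)}-\MI_{d+2},\MU_{(n,m)}\rangle$ and collecting, via the entry description above, the coefficient of each scalar $(\Vx_n)_i$ and of each $\Vell_{(n,m)}$ gives \eqref{adj_x} and \eqref{adj_l} term by term. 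I expect the only real care to be needed in the bookkeeping of \eqref{adj_x}: because $E$ consists of ordered pairs $(n,m)$ with $n<m$, a vertex $n$ occupies block-column $d{+}1$ in an incident edge of the form $(n,m)$ but block-column $d{+}2$ in an incident edge of the form $(m,n)$, which is precisely why the sum in \eqref{adj_x} splits into an $(n,m)\in E$ part and an $(m,n)\in E$ part, and why for each incident edge the symmetric pair of entries (e.g.\ $(\MU_{(n,m)})_{i,d+1}$ and $(\MU_{(n,m)})_{d+1,i}$) both contribute. Nothing analytically deep enters beyond this index tracking.
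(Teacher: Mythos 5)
Your proposal is correct and follows essentially the same route as the paper: set the gradient of the smooth, strongly convex objective to zero and exploit that $\FQ^*\FQ$ acts diagonally, sending $\Vx_n \mapsto 2\nu_n\Vx_n$ and $\Vell_{(n,m)} \mapsto 2\Vell_{(n,m)}$. You simply supply more of the bookkeeping (the Frobenius-norm computation and the adjoint verification) that the paper leaves implicit.
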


\begin{proof}
    Setting the gradient with respect to $\Vx$ and $\Vell$ 
    of the objective in \eqref{eq:admm:1} to zero,
    we obtain
    \begin{linenomath*}
    \begin{align}
        \frac{1}{\rho} (w_n \Vy_n )_{n \in V}
        + \FQ_{\Vx}^*( \MU^{(k)} - \MZ^{(k)})
        &=
        \FQ_{\Vx}^*( \FQ(\Vx, \Vell)),
        \\
        \frac{1}{\rho} \lambda
        + \FQ_{\Vell}^*(\MU^{(k)} - \MZ^{(k)})
        &=
        \FQ_{\Vell}^*( \FQ(\Vx, \Vell)).
    \end{align}
    \end{linenomath*}
    Due to 
    $(\FQ_{\Vx}^*(\FQ(\Vx, \Vell)))_{n} = 2 \nu_n \, \Vx_{n}$
    and 
    $(\FQ_{\Vell}^*(\FQ(\Vx, \Vell)))_{(n,m)} = 2 \Vell_{(n,m)}$,
    we obtain the assertion.
\end{proof}

The second ADMM step \eqref{eq:admm:2} can be separately computed
for single edges $(n,m) \in E$
and consists
in finding $\MU_{(n,m)}$ 
%minimizing \eqref{eq:admm2-proj},
%following page,
%\begin{figure*}
%\normalsize
%\setcounter{tempequation}{\value{equation}}
\begin{linenomath*}
\begin{equation}
    \argmin_{\MU_{(n,m)} \in \R^{(d+2)\times(d+2)}}
    \iota_{\cC}(\MU_{(n,m)}) 
    + \frac{\rho}{2} \,
    \lVert (\FQ(\Vx^{(k+1)}, \Vell^{(k+1)}))_{(n,m)}
    - \MU_{(n,m)} + \MZ_{(n,m)}^{(k)} \rVert^2_\tF,
    \label{eq:admm2-proj}
\end{equation}
\end{linenomath*}
%\hrulefill
%\end{figure*}
which is the projection of 
$(\FQ(\Vx^{(k+1)}, \Vell^{(k+1)}))_{(n,m)} + \MZ_{(n,m)}^{(k)}$
onto $\cC$.

\begin{theorem}
    \label{tho:problemII}
    Let $\MA = \MV\MSigma \MV^\tT \in \R^{(d+2) \times (d+2)}$
    be symmetric with $\MV^\tT\MV = \MI_{d+2}$ 
    and diagonal matrix $\MSigma \in \R^{(d+2)\times (d+2)}$.
    Then it holds
    \begin{linenomath*}
    \begin{equation} \label{proj}
        \proj_{\cC} (\MA) = \MV\hat\MSigma \MV^\tT,
    \end{equation}
    \end{linenomath*}
    where $\hat\MSigma \coloneqq 
    \max(\mathrm{diag}(\MSigma), -1)$ is meant componentwise.
\end{theorem}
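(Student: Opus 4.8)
The plan is to reduce the matrix problem to a scalar one by exploiting the orthogonal invariance of the Frobenius norm, and then to solve the scalar problem entrywise. First I would note that $\cC$ is nonempty, closed and convex: it is the intersection of the linear subspace of symmetric matrices with the translated positive semi-definite cone $-\MI_{d+2} + \{ \MB \in \R^{(d+2)\times(d+2)} : \MB \succeq 0 \}$. Hence $\proj_{\cC}$ is well-defined and single-valued, and it suffices to exhibit one minimizer of $\MB \mapsto \lVert \MA - \MB \rVert_\tF^2$ over $\MB \in \cC$ and to verify it equals $\MV\hat\MSigma\MV^\tT$.

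Next I would use that $\cC$ is stable under orthogonal conjugation, i.e.\ $\MB \in \cC$ if and only if $\MB' \coloneqq \MV^\tT \MB \MV \in \cC$, since symmetry is preserved and $\MV^\tT \MB \MV \succeq -\MI_{d+2}$ is equivalent to $\MB \succeq -\MI_{d+2}$ for orthogonal $\MV$. Because the Frobenius norm is invariant under $\MA \mapsto \MV^\tT \MA \MV$, we get $\lVert \MA - \MB \rVert_\tF = \lVert \MSigma - \MB' \rVert_\tF$, so the minimization over $\MB \in \cC$ is equivalent, via the distance-preserving bijection $\MB \mapsto \MB'$, to minimizing $\lVert \MSigma - \MB' \rVert_\tF^2$ over $\MB' \in \cC$ with $\MSigma$ diagonal.

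Then I would solve this diagonal problem directly. Discarding the off-diagonal entries gives $\lVert \MSigma - \MB' \rVert_\tF^2 \ge \sum_i (\MSigma_{ii} - \MB'_{ii})^2$ for every $\MB' \in \cC$. The key observation is that $\MB' \succeq -\MI_{d+2}$ forces $\MB'_{ii} \ge -1$ for all $i$, because the diagonal entries of the positive semi-definite matrix $\MB' + \MI_{d+2}$ are nonnegative. Consequently, for each $i$ one has $(\MSigma_{ii} - \MB'_{ii})^2 \ge (\MSigma_{ii} - \max(\MSigma_{ii},-1))^2$: this is trivial when $\MSigma_{ii} \ge -1$, and when $\MSigma_{ii} < -1$ it holds because then $\MSigma_{ii} - \MB'_{ii} \le \MSigma_{ii} + 1 < 0$. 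Summing over $i$ yields $\lVert \MSigma - \MB' \rVert_\tF^2 \ge \lVert \MSigma - \hat\MSigma \rVert_\tF^2$, and this lower bound is attained at $\MB' = \hat\MSigma$, which lies in $\cC$ since it is diagonal with eigenvalues $\max(\MSigma_{ii},-1) \ge -1$. Transforming back, $\MV\hat\MSigma\MV^\tT \in \cC$ minimizes the distance to $\MA$, and by uniqueness of the projection it equals $\proj_{\cC}(\MA)$.

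I do not anticipate a genuine obstacle: the only point requiring a little care is the reduction step — checking that $\cC$ is invariant under orthogonal conjugation and that the Frobenius distance is unchanged — after which the problem decouples completely over the diagonal and the elementary bound $\MB'_{ii} \ge -1$ does all the work.
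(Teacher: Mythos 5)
Your proof is correct and follows essentially the same route as the paper: conjugate by $\MV$ to reduce to the diagonal matrix $\MSigma$, use that $\MB'_{ii}\ge -1$ for any feasible matrix to bound the diagonal contribution from below, and observe that the bound is attained at $\hat\MSigma$. Your write-up is somewhat more complete in that you explicitly verify closedness/convexity of $\cC$ (hence uniqueness of the projection) and membership of $\MV\hat\MSigma\MV^\tT$ in $\cC$, which the paper leaves implicit.
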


\begin{proof}
    For any  $\hat\MA \in \mathcal C$,
    we have
    \begin{linenomath*}
    $$
        \lVert\hat\MA - \MA\rVert_\tF  
         = \lVert\underbracket{\MV^\tT \hat\MA \MV}_{\eqqcolon \MB}  - \MSigma\rVert_\tF.
    $$ 
    \end{linenomath*}
    Since $\hat\MA \succeq -\MI_{d+2}$,
   we  know that $\MB_{ii} \ge -1$.
   Hence we obtain
   \begin{linenomath*}
    \begin{align}
        \lVert\hat\MA - \MA\rVert_\tF^2             
         &\geq \sum_{i=1}^{d+2} |\MB_{ii} - \MSigma_{ii}|^2 
         \geq \smashoperator{\sum_{i \in \{ i : \MSigma_{ii} < -1\}}} |\MB_{ii} - \MSigma_{ii}|^2
        \geq \smashoperator{\sum_{i \in \{ i : \MSigma_{ii} < -1\}}} |\MSigma_{ii} + 1 |^2.
    \end{align}
    \end{linenomath*}
    Now the lower bound is exactly attained  
    for $\hat\MA = \MV \hat \MSigma \MV^\tT$ with 
    $\hat \MSigma = \max(\mathrm{diag}(\MSigma), -1)$.
\end{proof}

Note that
the special structure of $\MQ_{(n,m)} - \MI_{d+2}$ 
is not preserved by the projection onto $\cC$.
Summarizing the update steps in 
Theorem~\ref{tho:problemI} and \ref{tho:problemII},
we can solve our simplified relaxed real model \eqref{eq:conv-real-tik}
using the following ADMM scheme, 
whose convergence is here ensured by \cite[Cor~28.3]{bauschke}.

\begin{algorithm}[ADMM for Solving \eqref{eq:conv-real-tik}]
    \label{alg:1}
    ~\newline
    \emph{Input:} 
    $\Vy \in (\R^d)^N$,
    $w \in \R^N_+$,
    $\lambda \in \R^{M}_+$, 
    $\rho > 0$.
    \newline
    \emph{Initiation:}
    $\Vx^{(0)} \leftarrow 0 \in (\R^d)^N$,
    $\Vell^{(0)} \leftarrow 0 \in \R^M$,
    \\
    $\MU^{(0)} \leftarrow \MZ^{(0)} \leftarrow 0 \in (\R^{(d+2)\times(d+2)})^M$,
    \newline
    \emph{Iteration:} 
    For $k = 0,1,2,3,\dots$ until convergence:\\
    %\newpage
    \noindent
    \hspace*{15pt}
    $\Vx_{n}^{(k+1)}
    \leftarrow 
    \frac{1}{2 \nu_n}
    \bigl(
    (\FQ_{\Vx}^*(\MU^{(k)} - \MZ^{(k)}))_{n}
    + \frac{1}{\rho} \, w_n \Vy_{n}
    \bigr)
    $
    %\\[3pt]
    %\hspace*{\fill} 
    for all $n \in V$ (see Thm~\ref{tho:problemI}).
    \\
    \hspace*{15pt}
    $
    \Vell_{(n,m)}^{(k+1)}
    \leftarrow
    \frac{1}{2}
    \bigl(
    (\FQ_{\Vell}^*(\MU^{(k)} - \MZ^{(k)}))_{(n,m)}
    + \frac{1}{\rho} \, \lambda_{(n,m)}
    \bigr)$
    %\hspace*{\fill} 
    for all $(n,m) \in E$ (see Thm~\ref{tho:problemI}).
    \\
    \hspace*{15pt}
    $\MU^{(k+1)}_{(n,m)}
    \leftarrow 
    \proj_{\cC} \bigl( (\FQ(\Vx^{(k+1)},\Vell^{(k+1)}))_{(n,m)} 
    + \MZ^{(k)}_{(n,m)} \bigr)$
    %\\[3pt]
    %\hspace*{\fill}
    for all $(n,m) \in E$ (see Thm~\ref{tho:problemII}).
    \\
    \hspace*{15pt}
    $\MZ^{(k+1)} 
    \leftarrow
    \MZ^{(k)} + \FQ(\Vx^{(k+1)}, \Vell^{(k+1)}) - \MU^{(k+1)}$.
    \newline
    \emph{Output:} 
    $(\tilde \Vx, \tilde \Vell)$ solving \eqref{eq:conv-real-tik}.
\end{algorithm}

Similar schemes can be derived for the relaxed complex and quaternion problem.

\begin{comment}
\color{magenta}
\paragraph{Conterexample for spherical convergence.}
We consider a very synthetic data $\Vy \in \C^2$ 
which can be easily transferred into higher dimensions 
and increased in the dimensions, too, 
namely $\Vy_n := e^{\I\pi n}(1 + \epsilon)$ with $\epsilon \geq 0$
which is equivalent to $\Vy_{n,1} = (-1)^n(1 + \epsilon)$ 
where $\Vy_{n,2} \equiv 0$.
Hence, the problem directly coinsides with the one dimensional problem 
on the connection line between the north and south pole.
We have 
\begin{align}
    & \argmin_{\Vx_n, n \in \{1,2\}} 
    -\Vx_{1,1}(-(1+\epsilon)) -\Vx_{2,1}(1+\epsilon) - \Vell_{(1,2)}\lambda 
    \quad
    \text{s.t} 
    \left\{\begin{array}{l}
        \|\Vx_n\|_2 \leq 1 \text{ for all } n \in \{1,2\} \\
        \prod\limits_{n \in \{1,2\}}(1 - \|\Vx_1\|_2^2) - (\Vell_{(1,2)} - \langle \Vx_1, \Vx_2\rangle)^2 \geq 0
    \end{array}
        \right.
    \\
    = 
    & \left\{
        \begin{array}{ll}
        -2(1+\epsilon) + \lambda & \text{ if } \Vx_{n,1} = (-1)^n \text{ and } \Vell_{(1,2)} = \Vx_{1,1}\Vx_{2,1} = -1,\\
        -\lambda & \text{ if } \Vx_{n,1} \equiv 0 \text{ and } \Vell_{(1,2)} = 1\\
        \end{array}
        \right. 
\end{align}
with minimal functional values in both cases.
Now, assume $\lambda > 1+\epsilon$, such that
$$\FJ((-1,1),-1) = -2(1+\epsilon) + \lambda > -\lambda = \FJ((0,0),1).
%\Longleftrightarrow 
%1+\epsilon < \lambda.
$$
\color{black}
\end{comment}

\section{Numerical Experiments} \label{sec:numerics}

During the following numerical simulation, 
we give a proof-of-the-concept 
for denoising sphere and $\SO(3)$-valued data.
We compare our simplified relaxed real model \eqref{eq:conv-real-tik:2}
with Condat's relaxed complex model \eqref{eq:conv-tik} 
for circle-valued data.
Here, the proposed ADMM in Algorithm \ref{alg:1} 
shows a significant faster convergence behaviour
than the originally proposed 
\textit{Proximal Method of Multipliers} (PMM) 
in \cite{condat_1D2D},
which depends on two parameters $\tau$ and $\sigma$.
Furthermore, we apply our simplified relaxed real method 
for hue and chromaticity denoising in imaging.
The employed algorithms are implemented\footnote{
The code is available at GitHub: \url{https://github.com/JJEWBresch/relaxed_tikhonov_regularization}.} 
in Python~3.11.4 using Numpy~1.25.0 and Scipy~1.11.1.
The experiments are performed on an off-the-shelf iMac 2020
with Apple M1 Chip (8‑Core CPU, 3.2~GHz) and 8~GB RAM.

\subsection{$\sphere_1$-Valued Data}        \label{sec:s1-data}

We start with a synthetic, smooth signal $(\Vx_n)_{n \in V}$
on the line graph.
More precisely, we consider the circle-valued, 
one-dimensional signal
in Figure~\ref{fig:func_value_S1_1D}.
The synthetic noisy observation $(\Vy_n)_{n \in V}$
are generated using the \textit{von Mises--Fisher} distribution 
by 
\begin{linenomath*}
$$
\Vy_n \sim \mathcal N_{\text{vMF}}(\Vx_n, \kappa) \quad \text{for all} \quad n \in V,
$$
\end{linenomath*}
where $\kappa>0$ is the so-called capacity.
To denoise the generated measurements,
we apply PMM ($\tau \coloneqq 0.1$, $\sigma \coloneqq (4 \tau)^{-1}$)
on the relaxed complex model \eqref{eq:conv-tik},
ADMM ($\rho \coloneqq 3$) 
on the relaxed real model \eqref{eq:conv-tik_real},
and compare these with ADMM ($\rho \coloneqq 3$) in Algorithm \ref{alg:1} 
for the simplified relaxed real model \eqref{eq:conv-real-tik:2},
where the regularization parameters 
are chosen as $w_n \coloneqq 1$ and $\lambda_{(n,m)} \coloneqq 25$.
Starting all algorithms from zero, 
we observe convergence 
to the same limit, which is shown in Figure~\ref{fig:func_value_S1_1D}.

Additionally, 
we compare the numerical solution of the relaxed models 
with the one of the TV solver CPPA-TV 
(cyclic proximal point algorithm for TV of circle-valued data)
from \cite{BerLauSteWei14}.
Since the underlying signal is rather smooth,
we observe the well-known staircase effects 
of the TV regularization.
For this comparison,
we stop both algorithm---%
ADMM on \eqref{eq:conv-real-tik:2}
and CPPA-TV---%
if the residuum
(the difference between subsequent iterates in the 2-norm)
is smaller than $10^{-4}$.
Although our method has to handle positive semi-definite matrices,
the computation time amounts to 3~seconds,
whereas CPPA-TV,
which solves the TV regularization directly on the circle,
needs 25~seconds.
To compare the convergence speed 
with respect to the three different relaxed Tikhonov models
in more detail, 
we run those algorithms in a second experiment 
for 600 iterations 
and determine the time after which the objective 
remains in a small $\epsilon \coloneqq 10^{-5}$ 
neighbourhood around the limiting value.
These times are recorded in Table~\ref{tab:func_value_S1_1D}.
For the relaxed complex model \eqref{eq:conv-tik},
ADMM shows a significant speed-up compared with PMM. 
Moreover, switching to our simplified relaxed real model 
gives an additional acceleration.
Notice that the solution $(\tilde \Vx_n)_{n \in V}$
of \eqref{eq:conv-real-tik:2} fulfils $\lVert\tilde \Vx_n\rVert \leq 1$,
since $\MQ_{(n,m)} \succeq 0$
and thus 
\begin{linenomath*}
$$
\det\Bigl(\begin{smallmatrix}
    \MI & \tilde\Vx_n \\
    \tilde\Vx_n^{\tT} & 1
\end{smallmatrix}\Bigr) 
= 1 - \lVert\tilde \Vx_n\rVert^2 
\geq 0
$$
\end{linenomath*}
for all $(n,m) \in E$.
For the relaxed complex models, there holds an analogous observation.
Determining the mean of $1 - \lVert\tilde \Vx_n\rVert$ over $n \in V$
as well as of $1 - \lvert x_n \rvert$ 
for the numerical real and complex solutions, see Table~\ref{tab:func_value_S1_1D},
we observe convergence to the circle,
i.e.\ the numerical solutions solve the original 
(unrelaxed) problem \eqref{eq:class-tik_real_1}.
For the simulation in Figure~\ref{fig:func_value_S1_1D},
the mean distance to the circle over the run time
is reported in Figure~\ref{fig:sphereconvsignal}.

\begin{figure}
\includegraphics[width=\linewidth, clip=true, trim=100pt 5pt 90pt 20pt]{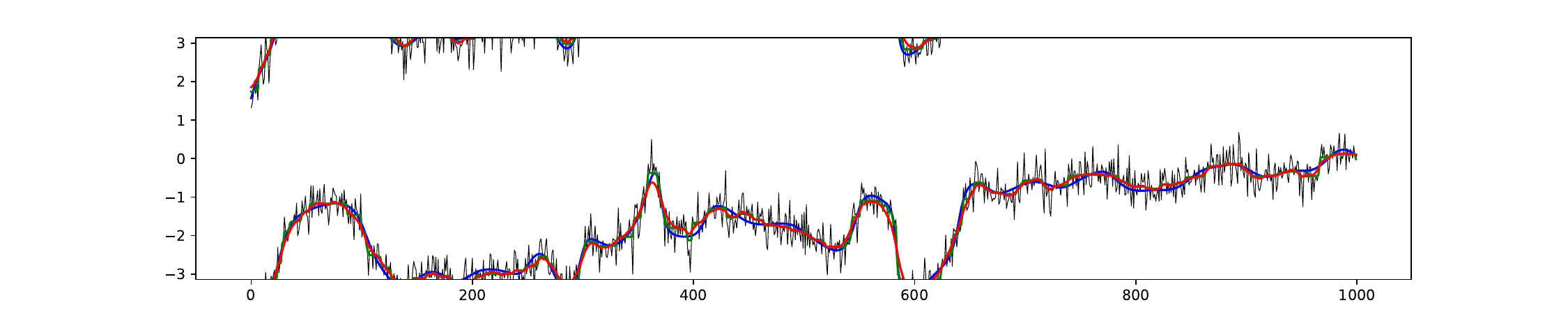}
\caption{
Ground truth (blue), 
noisy observation with $\kappa = 10$ (black),
numerical solution by Alg.~\ref{alg:1} (red)
with $\lambda \equiv 25$ and $\rho = 3$
of the line graph signal of length $N = 1000$ in Section~\ref{sec:s1-data}
in comparison with the solution obtained by CPPA-TV (green)
with regularization parameter $\lambda \equiv 0.8$ and $\lambda_0 = \pi$.
The $\sphere_1$-values are represented by their angles in  $[-\pi,\pi)$.}
\label{fig:func_value_S1_1D}
\end{figure}

\begin{SCfigure}[1]
\caption{\label{fig:sphereconvsignal}
Mean distance to the circle over the computation time
    for the experiment in Figure~\ref{fig:func_value_S1_1D}.}
\includegraphics[width=0.66\linewidth, clip=true, trim=55pt 5pt 70pt 25pt]{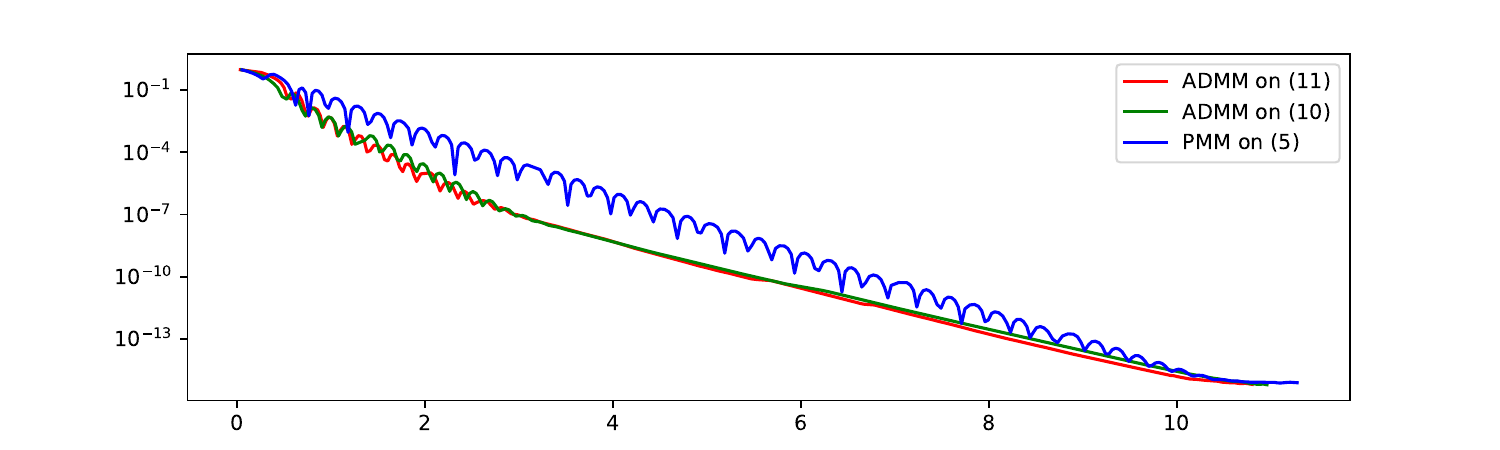}
\end{SCfigure}

\begin{SCtable}[2]
\caption{
The mean distance to the circle,
computation time, 
and iteration numbers
for the line graph signal in Section~\ref{sec:s1-data}
with $\kappa = 10$.
The recorded values are averages over 50 randomly generated ground truths. 
}
\centering\footnotesize
\begin{tabular}{l c c c}
\toprule
 	& mean & time & iter. \\
\midrule
PMM on \eqref{eq:conv-tik} & $10^{-9}$ & $2.10$ & $201$ \\
ADMM on \eqref{eq:conv-tik_real} & $10^{-12}$ &  $1.83$ & $182$ \\
ADMM on \eqref{eq:conv-real-tik:2} & $10^{-13}$ & $1.77$ & $181$\\
\bottomrule
\end{tabular}
\label{tab:func_value_S1_1D}
\end{SCtable}

\begin{figure}
\begin{minipage}[c]{0.7\textwidth}
    \includegraphics[width=0.90\linewidth, clip=true, trim=135pt 20pt 487pt 40pt]{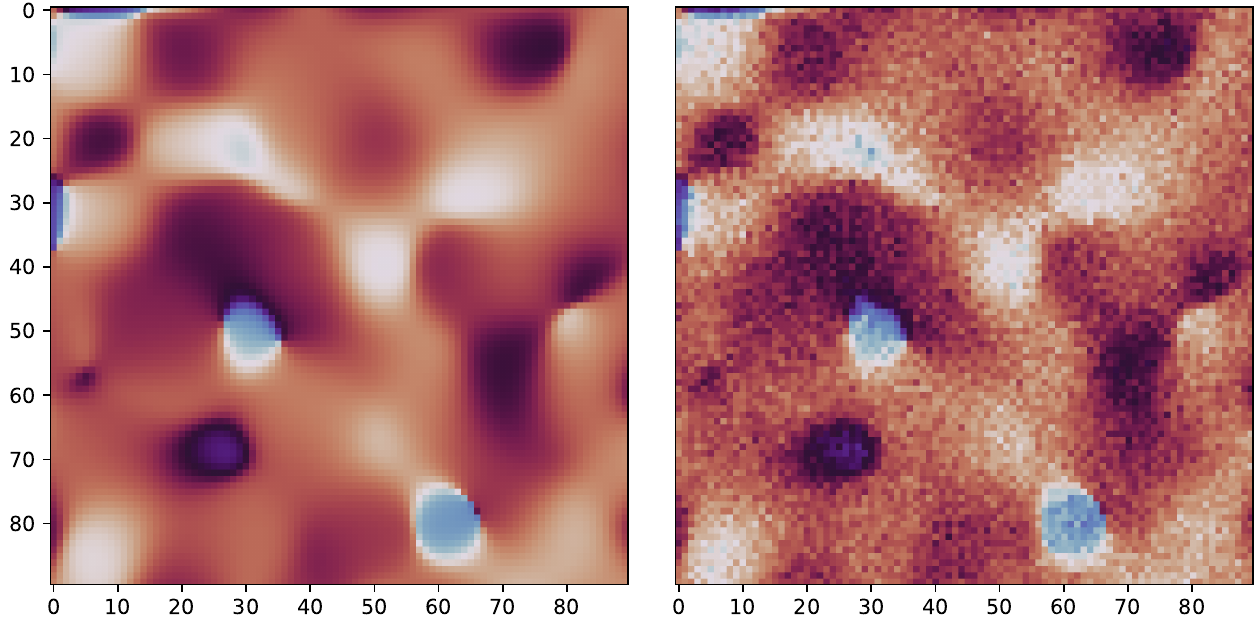}
    \includegraphics[width=0.072\linewidth, clip=true, trim=1045pt 20pt 135pt 40pt]{diagrams_circle/ADMMred_1_2500_0.0001.pdf} \\
    \vspace{0.075cm}
    %\hspace{0.05cm}
    \includegraphics[width=0.445\linewidth, clip=true, trim=745pt 20pt 185pt 40pt]{diagrams_circle/ADMMred_1_2500_0.0001.pdf}
    \includegraphics[width=0.43\linewidth, clip=true, trim=749pt 20pt 190pt 40pt]{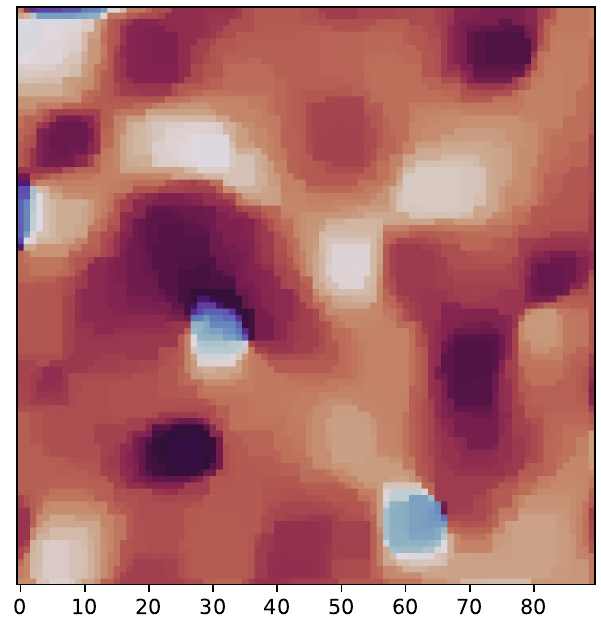}
    \includegraphics[width=0.073\linewidth, clip=true, trim=1045pt 20pt 135pt 40pt]{diagrams_circle/ADMMred_1_2500_0.0001.pdf} \\
\end{minipage}
\hfill
\begin{minipage}[c]{0.28\textwidth}
\vspace{-1.25cm}
%\hspace{-1.5cm}
    \caption{\textit{Top row}: Ground truth (left) and
noisy observation with $\kappa = 20$ (right).
\textit{Bottom row}: Numerical solution (left) using Alg.~\ref{alg:1}
on the $90 \times 90$ pixel image graph signal in Section~\ref{sec:s1-data} 
with $w_n \equiv 1, \lambda \equiv 1$ and $\rho = 3$.
The solution comes with
an restoration error (RMSE) $7.627\cdot10^{-2}$,
mean distance to the sphere $7.834\cdot10^{-5}$.
Comparison with CPPA-TV (right)
with $\lambda \equiv 0.3$ and $\lambda_0 = \pi$.
The solution comes with an restoration error (RMSE) $8.702\cdot10^{-2}$.
The colour map corresponds to the angles of the $\sphere_1$-values.}
\label{fig:func_value_S1_2D}
\end{minipage}
\end{figure}

\begin{SCfigure}[1]
\caption{\label{fig:sphereconv}
Mean distance to the circle over the computation time
    for the experiment in Figure~\ref{fig:func_value_S1_1D}.}
\includegraphics[width=0.66\linewidth, clip=true, trim=60pt 5pt 60pt 25pt]{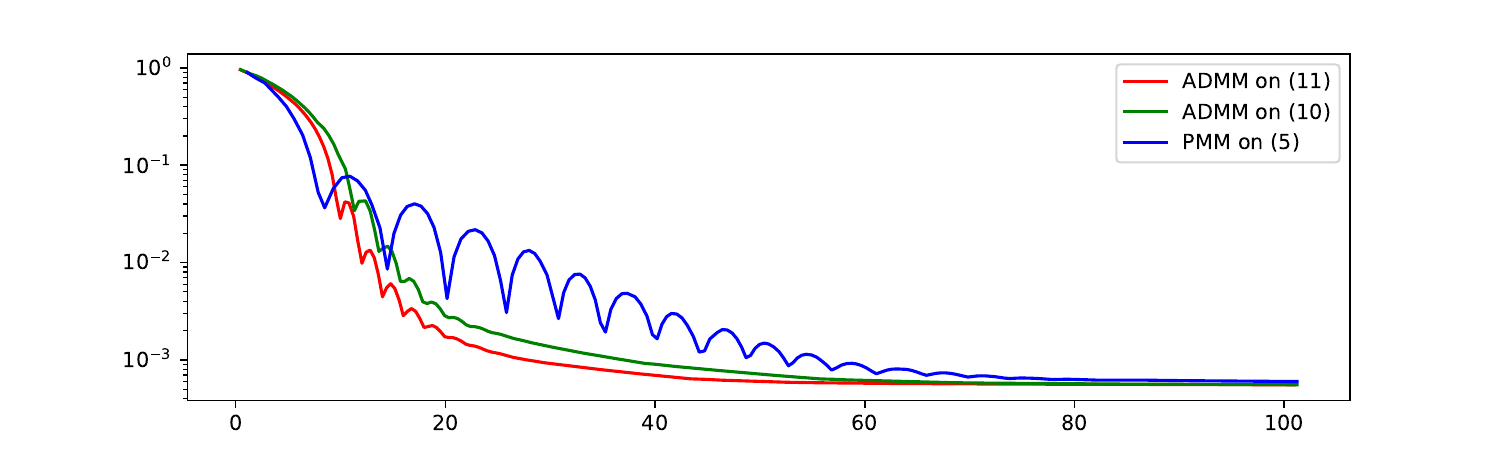}
\end{SCfigure}

\begin{SCtable}[2]
\caption{
The mean distance to the circle,
computation time, 
and iteration numbers
for the image graph signal in Section~\ref{sec:s1-data}
with $\kappa = 20$.
The recorded values are averages over 50 randomly generated ground truths. 
}
\centering\footnotesize
\begin{tabular}{l c c c c }
\toprule
		 & mean & time & iter. \\
\midrule
PMM on \eqref{eq:conv-tik} & $10^{-4}$ & $747.18$ & $5536$\\
ADMM on \eqref{eq:conv-tik_real} & $10^{-4}$ & $389.01$ & $2457$ \\
ADMM on \eqref{eq:conv-real-tik:2} & $10^{-4}$ & $301.08$ & $1943$\\
\bottomrule
\end{tabular}
\label{tab:func_value_S1_2D}
\end{SCtable}

For $\sphere_1$-valued images, 
the (\emph{image}) graph $G$ consists of the image pixels,
which are connected to their vertical and 
horizontal neighbours. 
As before, we generate synthetic observation
using the von Mises--Fisher distribution.
The synthetic observations are denoised 
using PMM ($\tau \coloneqq 0.1$, $\sigma \coloneqq (8 \tau)^{-1}$)
on the relaxed complex model \eqref{eq:conv-tik},
ADMM ($\rho \coloneqq 3$)
on the relaxed real model \eqref{eq:conv-tik_real},
as well as ADMM ($\rho \coloneqq 3$) on the simplified relaxed real model \eqref{eq:conv-real-tik:2},
where the regularization parameters 
are chosen as $w_n \coloneqq 1$ and $\lambda_{(n,m)} \coloneqq 1$.
The generated data together with its denoised version,
which nearly coincides for all three methods,
are shown in Figure~\ref{fig:func_value_S1_2D}.  
Similarly to the previous example,
we compare our relaxed models with CPPA-TV.
The typical staircase effects can again be observed
together with a larger restoration error (root mean squared error (RMSE))
compared to our model.
The stopping criteria is chosen as for the line-graph example above.
The computation time for our approach amounts to 65 seconds 
and for CPPA-TV to 367 seconds.
To compare the convergence speed with respect 
to the three different relaxed Tikhonov models in more details, 
we run the algorithms in a second experiment 
for 6000 iterations and determine the time after which 
the objective stays in an $\epsilon \coloneqq 10^{-3}$ neighborhood 
around the limiting value.
These times are recorded in Table~\ref{tab:func_value_S1_2D}.
Differently from the line-graph setting,
all methods show much slower numerical convergence.
Similarly, 
the convergence of $\Vx_n$ to $\sphere_1$ and $x_n$ to $\Csphere$ is slowed down
but can be observed after the final iteration,
see Figure~\ref{fig:sphereconv}. 
This behaviour has also been observed in \cite{condat_1D2D}
and transfers to our simplified relaxed real model.
It seems that this drawback only occurs for circle-valued images.
The benefit of ADMM and our simplified relaxed real model
with respect to the computation time 
are clearly visible.

%\begin{SCfigure}[3]
%    \includegraphics[width=0.33\linewidth, clip=true, trim=10pt 0pt 60pt 0pt]{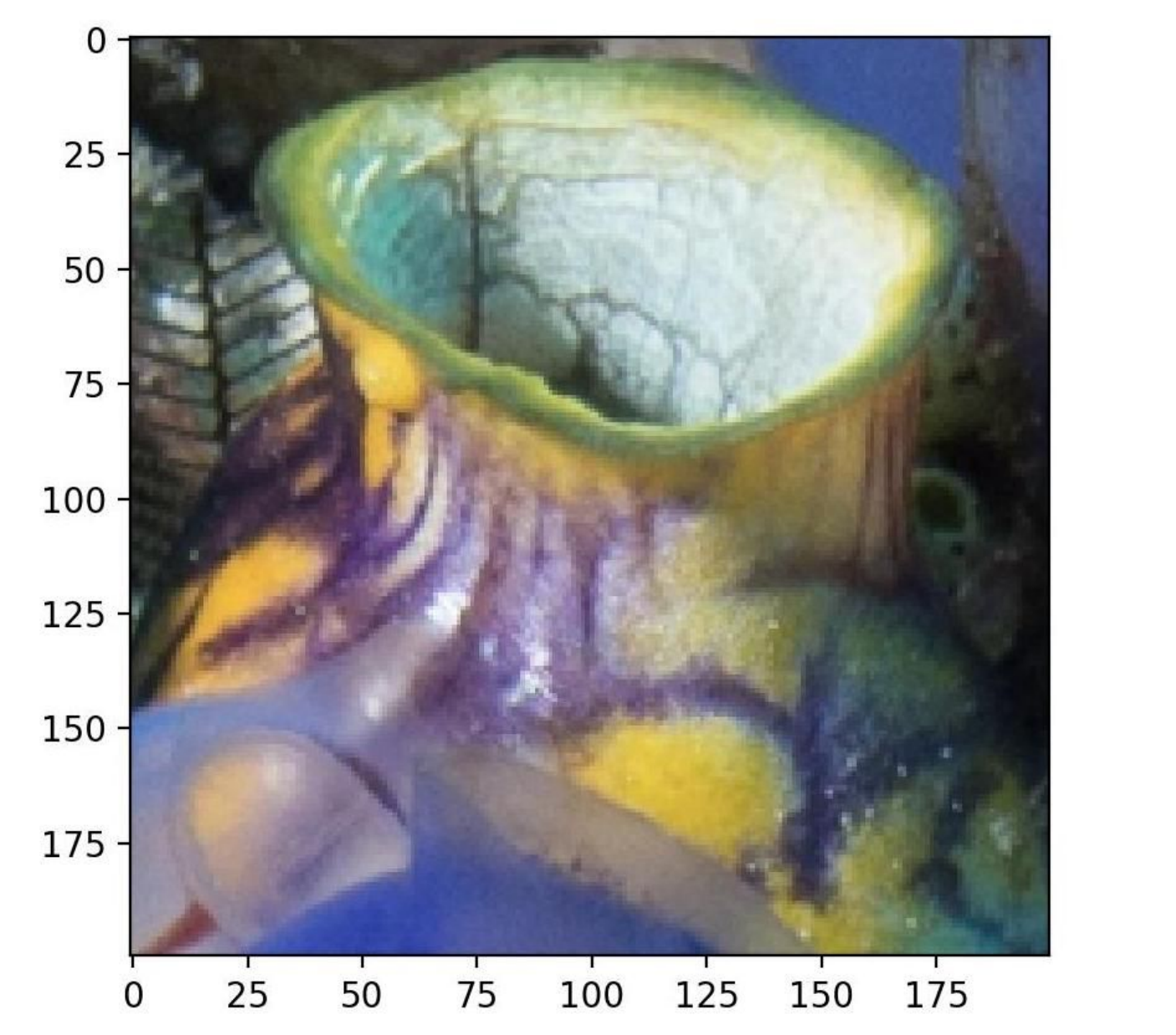}
%    \caption{Ground truth of size $200 \times 200$
%    for the colour denoising in Section~\ref{sec:denoise-col}.}
%    \label{fig:coral}
%\end{SCfigure}

\subsection{Denoising Colour Values}
\label{sec:denoise-col}

Sphere-valued data naturally occur as colour information of colour images.
For instance,
the hue of the HSV (hue, saturation, value) colour model is $\sphere_1$-valued,
and the chromaticity  of the CB (chromaticity, brightness) colour model
is $\sphere_2$-valued, 
see \cite[§~1.1]{CH2001}.
Identifying the circle $\sphere_1 = \{(\cos \alpha, \sin \alpha)^\tT : \alpha \in [0, 2\pi)\}$ with the angle $\alpha \in [0, 2\pi)$,
the hue starts with red at $0$, 
goes over green at $2\pi / 3$ and blue at $4\pi/3$,
and return to red at $2\pi$.
The chromaticity is the normalized colour vector of the RGB (red, green, blue) model. 
For the numerical simulations,
we take the hue and the chromaticity of the colour image in Figure~\ref{fig:hue_corals},
perturb the colour values according to the von Mises--Fisher distribution,
and apply Algorithm~\ref{alg:1} based on the simplified real model 
for denoising. 
The results are shown in Figure~\ref{fig:hue_corals} and \ref{fig:chroma_corals}.
Visually, most of the noise is removed.

\begin{figure}
%\begin{minipage}[t]{1\textwidth}
\begin{minipage}[c]{0.3\textwidth}
\includegraphics[width=1.1\linewidth, clip=true, trim=10pt 0pt 60pt 0pt]{diagrams/corals_orig_200x200.pdf}
\end{minipage}
\hfill
\begin{minipage}[c]{0.6\textwidth}
\caption{
\textit{Left:} Ground truth of size $200 \times 200$
for the colour denoising in Section~\ref{sec:denoise-col}. \\
\textit{Below:} Hue ground truth (left), noisy observation with $\kappa = 10$ (middle),
and denoised version (right)
of the hue experiment in Section~\ref{sec:denoise-col}
with respect to Figure~\ref{fig:hue_corals}.
ADMM has been employed with $w_n \coloneqq 1$,
$\lambda_{(n,m)} \coloneqq 1$ and $\rho \coloneqq 3$.
The solution comes
with an mean distance to the sphere $5.874\cdot10^{-4}$.}
\label{fig:hue_corals}
\end{minipage}
%\end{minipage}
\hfill
\begin{minipage}[t]{\textwidth}
\includegraphics[width=\linewidth, clip=true, trim=135pt 0pt 135pt 0pt]{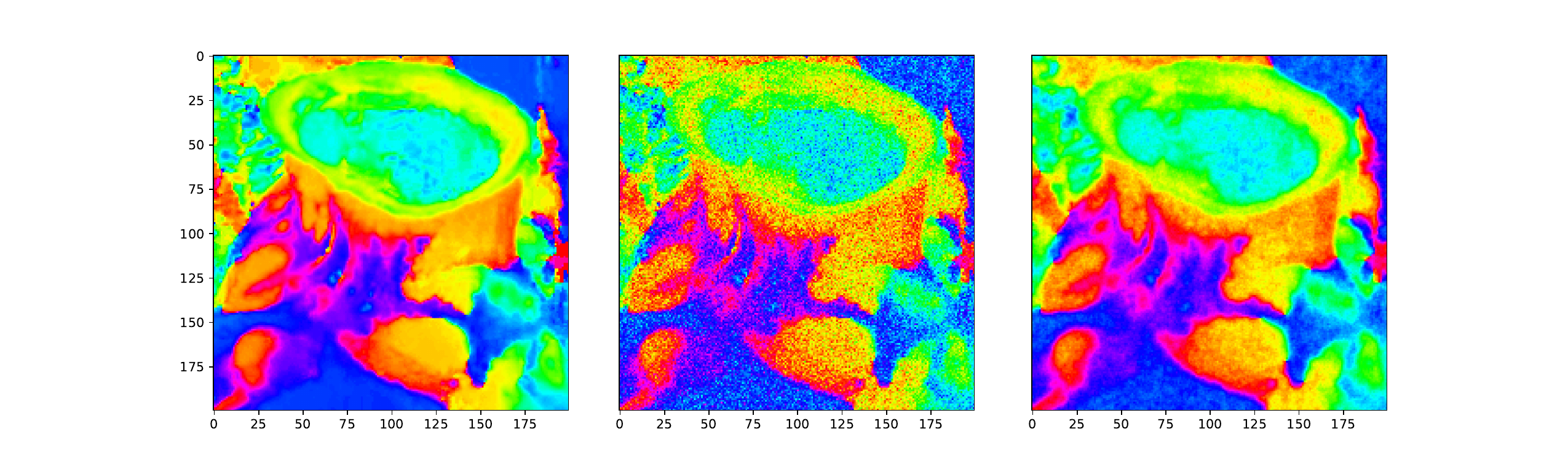}
\end{minipage}
\end{figure}

\begin{figure}
\includegraphics[width=\linewidth, clip=true, trim=135pt 0pt 135pt 0pt]{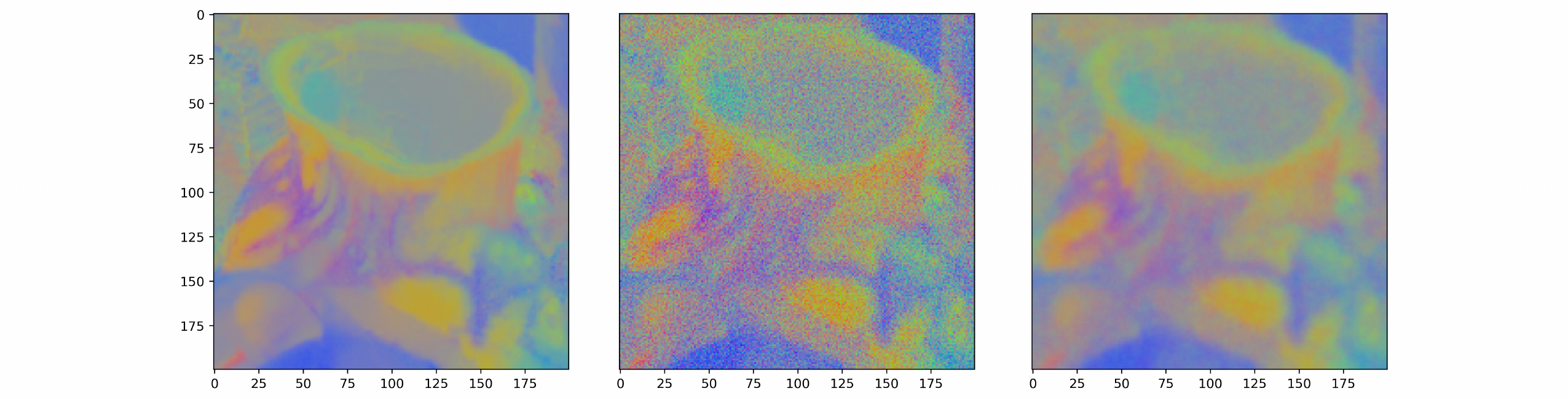}
\caption{
Ground truth (left), noisy observation with $\kappa = 100$ (middle),
and denoised version (right)
of the chromaticity experiment in Section~\ref{sec:denoise-col}
with respect to Figure~\ref{fig:hue_corals}.
ADMM has been employed with $w_n \coloneqq 1$,
$\lambda_{(n,m)} \coloneqq 3$,
and $\rho \coloneqq 3$.
The solution comes
with an mean distance to the sphere $2.343\cdot10^{-10}$.}
\label{fig:chroma_corals}
\end{figure}

\begin{SCfigure}[2]
\includegraphics[width=0.47\linewidth, clip=true, trim=30pt 7pt 30pt 2pt]{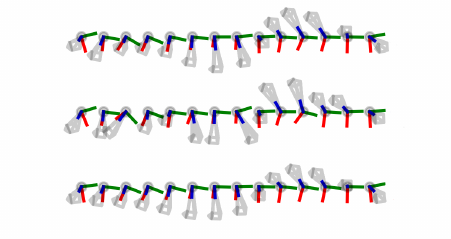}
\caption{
Ground truth (top), 
noisy observation with $\kappa_1 \coloneqq 30$ and $\kappa_2 \coloneqq 15$ (middle),
and denoised version (bottom)
of the $\SO(3)$-valued signal on the line graph in Section~\ref{sec:so3-data}.
The rotations are visualized by operating on a 3d object.
The red, green, and blue lines indicate the rotated unit vectors.
The entire signal has a length of $N \coloneqq 1000$,
where only the rotations at $n = 1, 11, 21, 31, \dots$ are visualized.
The parameters have been
$w_n \coloneqq 1$, 
$\lambda_{(n,m)} \coloneqq 50$, 
and $\rho \coloneqq 3$.
The solution comes
with an mean distance to the unit quaternions $3.245\cdot10^{-12}$.}
\label{fig:func_value_S3_1D}
\end{SCfigure}

\subsection{$\SO(3)$-Valued Data}       \label{sec:so3-data}

As discussed in Section~\ref{sec:sdc}, 
we want to employ Algorithm~\ref{alg:1} to denoise
3d rotation data on line and image graphs.
Starting from a smooth $\SO(3)$-valued signal $(\MR_n)_{n \in V}$
corresponding to the rotation axes $\Vv_n \in \sphere_2$ 
and rotation angles $\alpha_n \in \sphere_1$,
i.e.\ $\MR_n = \MR(\Vv_n, \alpha_n)$,
we generate disturbed rotation axes and angles
with respect to the von Mises--Fisher distributions on $\sphere_2$ 
and $\sphere_1$.
More precisely, 
we sample
\begin{linenomath*}
\begin{equation*}
    \Vw_n \sim \mathcal N_{\text{vMF}} (\Vv_n, \kappa_1)
    \quad\text{and}\quad
    \beta_n \sim \mathcal N_{\text{vMF}} (\alpha_n, \kappa_2).
\end{equation*}
\end{linenomath*}
The synthetic observations are then given by 
$(\MS_n)_{n\in V}$ with $\MS_n \coloneqq \MR(\Vw_n, \beta_n)$.
The aim is now to denoise the signal $(\MS_n)_{n \in V}$.
For this, 
we use the double cover $\SO(3) \cong \Hsphere / \{-1,1\}$
to represent the $3 \times 3$ matrices $\MS_n$
by their quaternion $q_n \coloneqq \pm q(\Vw_n,\beta_n)$, 
see \eqref{eq:quad-mat}.
Starting with $q_1 \coloneqq q(\Vw_1,\beta_1)$
the remaining signs of $q_n$ are here chosen such that
$\Re[q_n \bar q_m] \ge 0$ for all $(n,m) \in E$.
In this example,
the original signal is smooth
and the noise small enough
such that the sign choice is well defined.
Finally,
Algorithm~\ref{alg:1} is employed on the vector representations 
$\Vq_n \coloneqq (\Re[q_n], \Im_\I[q_n], \Im_\J[q_n], \Im_\K[q_n])^\tT$.
For the line graph,
see Figure~\ref{fig:func_value_S3_1D},
ADMM takes 8.60 seconds (209 iterations) to converge.
For the image graph, 
see Figure~\ref{fig:func_value_S3_2D},
ADMM needs 41.69 seconds (219 iterations) until convergence.  
In both cases,
the mean of $1- \lVert \Vq_n \rVert$ over $n \in V$
reaches $10^{-9}$ after around 200 iterations.
Hence,
the computed numerical solutions are in fact solutions
of the original unrelaxed problem.

\begin{figure*}[t]
\includegraphics[width=\linewidth, clip=true, trim=50pt 50pt 80pt 20pt]{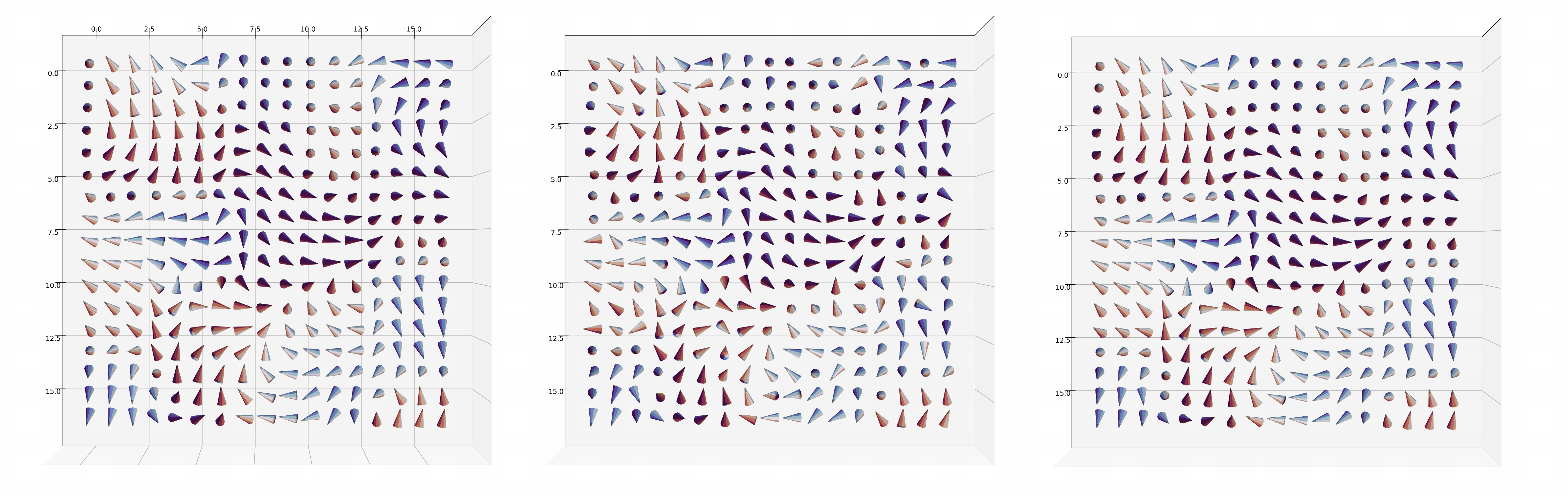}
\caption{
Ground truth (left), 
noisy observation with $\kappa_1 \coloneqq 30$ and $\kappa_2 \coloneqq 5$ (middle),
and denoised version (right)
of the $\SO(3)$-valued signal on the image graph in Section~\ref{sec:so3-data}.
The rotations are visualized by operating on a colored 3d cone.
The entire signal consists of $90 \times 90$ pixel,
where only every third pixel from the 25th to the 75th pixel is visualized.
The parameters have been
$w_n \coloneqq 1$, 
$\lambda_{(n,m)} \coloneqq 1$, 
and $\rho \coloneqq 3$.
The solution comes
with an mean distance to the unit quaternions $1.667\cdot10^{-10}$.}
\label{fig:func_value_S3_2D}
\end{figure*}

%\appendix
\section*{Appendix} 
\paragraph{Proof of Theorem~\ref{the:equiv-quad}.}
%{\appendix[Proof of Theorem~\ref{the:equiv-quad}]
Recall the identification of the quaternion $z \in \HH$ with 
$\Vz \coloneqq (\Re[z], \Im_\I [z], \Im_\J[z], \Im_\K[z]) \in \R^4$
such that $\lvert z \rvert = \lVert \Vz \rVert$.
Moreover,
each quaternion can be identified with the matrix
\begin{linenomath*}
\begin{equation*}
    \Mquater(z) 
    \coloneqq
    \left[\begin{smallmatrix}
        \Re[z] & -\Im_\I[z] & -\Im_\J[z] & -\Im_\K[z] \\ 
        \Im_\I[z] & \Re[z] & -\Im_\K[z] & \Im_\J[z] \\
        \Im_\J[z] & \Im_\K[z] & \Re[z] & -\Im_\I[z] \\
        \Im_\K[z] & -\Im_\J[z] & \Im_\I[z] & \Re [z]
      \end{smallmatrix}
    \right]
    \in \R^{4 \times 4}.
\end{equation*}
\end{linenomath*}
In this way,
the addition and multiplication of quaternions carries over
to the addition and multiplication of the associated matrices.
The matrix $P_{(n,m)} \in \HH^{3 \times 3}$ in \eqref{eq:P-quad} is positive semi-definite
if and only if the real-valued representation
\begin{linenomath*}
\begin{equation}    
    \label{eq:mat_r4}
  \MP_{(n,m)} 
  \coloneqq
  \left[
    \begin{smallmatrix}
      \MI_4 & \Mquater(x_n) & \Mquater(x_m)\\
      \Mquater(x_n)^\tT & \MI_4 &   \Mquater(r_{(n,m)})^\tT\\
      \Mquater(x_m)^\tT & \Mquater(r_{(n,m)}) &  \MI_4
    \end{smallmatrix}
  \right]
  \in \R^{12 \times 12}
\end{equation}
\end{linenomath*}
is positive semi-definite.
In order to establish the equality
between quaternion model \eqref{eq:conv-quad-tik}
and the real-valued model in \eqref{eq:conv-real-tik} for $d=4$,
we compare the Schur complements of a permuted version of $\MP_{(n,m)}$ and of $\MQ_{(n,m)}$.
First,
we compute the Schur complement of $\MQ_{m,n}$ in \eqref{eq:mat_real_d4}
with respect to $\MI_4$,
which is again given by 
\begin{linenomath*}
\begin{equation*} 
  \MQ_{(n,m)}/ \MI_4 
  =
  \left[
    \begin{smallmatrix}
      1 - \lvert \Vx_n \rvert^2 
      & \Vell_{(n,m)} - \langle \Vx_n , \Vx_m\rangle\\
      \Vell_{(n,m)} - \langle \Vx_n , \Vx_m\rangle
      & 1 - \lvert \Vx_m \rvert^2
    \end{smallmatrix}
  \right].
\end{equation*}
\end{linenomath*}
\begin{table}
  \centering
  \rotatebox{90}{%\fbox{
  \begin{subcaptionblock}{0.95\textheight}
    \centering
      $
        % \tilde\MP_{(n,m)} 
        % \coloneqq
        \left[
          \scalebox{0.8}{$
            \begin{array}{*{2}{cccc|}cccc}
              % 1st block
              1 & 0 & 0 & 0
              & \Re[x_n] & -\Im_\I[x_n] & -\Im_\J[x_n] & -\Im_\K[x_n]
              & \Re[x_m] & -\Im_\I[x_m] & -\Im_\J[x_m] & -\Im_\K[x_m]
              \\
              0 & 1 & 0 & 0
              & \Im_\I[x_n] & \Re[x_n] & -\Im_\K[x_n] & \Im_\J[x_n]
              & \Im_\I[x_m] & \Re[x_m] & -\Im_\K[x_m] & \Im_\J[x_m]
              \\
              0 & 0 & 1 & 0
              & \Im_\J[x_n] & \Im_\K[x_n] & \Re[x_n] & -\Im_\I[x_n]
              & \Im_\J[x_m] & \Im_\K[x_m] & \Re[x_m] & -\Im_\I[x_m]
              \\
              0 & 0 & 0 & 1
              & \Im_\K[x_n] & -\Im_\J[x_n] & \Im_\I[x_n] & \Re [x_n] 
              & \Im_\K[x_m] & -\Im_\J[x_m] & \Im_\I[x_m] & \Re [x_m]
              \\ \hline
              % 2nd block
              \Re[x_n] & \Im_\I[x_n] & \Im_\J[x_n] & \Im_\K[x_n]
              & 1 & 0 & 0 & 0
              & \Re[r_{(n,m)}] & \Im_\I[r_{(n,m)}] & \Im_\J[r_{(n,m)}] & \Im_\K[r_{(n,m)}]
              \\
              -\Im_\I[x_n] & \Re[x_n] & \Im_\K[x_n] & -\Im_\J[x_n]
              & 0 & 1 & 0 & 0
              & -\Im_\I[r_{(n,m)}] & \Re[r_{(n,m)}] & \Im_\K[r_{(n,m)}] & -\Im_\J[r_{(n,m)}]
              \\
              -\Im_\J[x_n] & -\Im_\K[x_n] & \Re[x_n] & \Im_\I[x_n]
              & 0 & 0 & 1 & 0
              & -\Im_\J[r_{(n,m)}] & -\Im_\K[r_{(n,m)}] & \Re[r_{(n,m)}] & \Im_\I[r_{(n,m)}]
              \\
              -\Im_\K[x_n] & \Im_\J[x_n] & -\Im_\I[x_n] & \Re [x_n]
              & 0 & 0 & 0 & 1 
              & -\Im_\K[r_{(n,m)}] & \Im_\J[r_{(n,m)}] & -\Im_\I[r_{(n,m)}] & \Re [r_{(n,m)}]
              \\ \hline
              % 3rd block
              \Re[x_m] & \Im_\I[x_m] & \Im_\J[x_m] & \Im_\K[x_m]
              & \Re[r_{(n,m)}] & -\Im_\I[r_{(n,m)}] & -\Im_\J[r_{(n,m)}] & -\Im_\K[r_{(n,m)}]
              & 1 & 0 & 0 & 0 
              \\
              -\Im_\I[x_m] & \Re[x_m] & \Im_\K[x_m] & -\Im_\J[x_m]
              & \Im_\I[r_{(n,m)}] & \Re[r_{(n,m)}] & -\Im_\K[r_{(n,m)}] & \Im_\J[r_{(n,m)}]
              & 0 & 1 & 0 & 0 
              \\
              -\Im_\J[x_m] & -\Im_\K[x_m] & \Re[x_m] & \Im_\I[x_m]
              & \Im_\J[r_{(n,m)}] & \Im_\K[r_{(n,m)}] & \Re[r_{(n,m)}] & -\Im_\I[r_{(n,m)}]
              & 0 & 0 & 1 & 0 
              \\
              -\Im_\K[x_m] & \Im_\J[x_m] & -\Im_\I[x_m] & \Re [x_m] 
              & \Im_\K[r_{(n,m)}] & -\Im_\J[r_{(n,m)}] & \Im_\I[r_{(n,m)}] & \Re [r_{(n,m)}]
              & 0 & 0 & 0 & 1 
            \end{array}$}
        \right]
        $
    \caption{Real-valued representation $\MP_{(n,m)}$ of $P_{(n,m)}$.}
    \label{tab:bigmatrix:orig}
  \end{subcaptionblock}}%}
  \quad
  \rotatebox{90}{%\fbox{
  \begin{subcaptionblock}{0.95\textheight}
    \centering
    $ \left[
        \scalebox{0.8}{$
          \begin{array}{cccc|*{8}{c}}
            % 1st block
            1 & 0 & 0 & 0
            & \Re[x_n] & \Re[x_m]
            & -\Im_\I[x_n] & -\Im_\I[x_m]
            & -\Im_\J[x_n] & -\Im_\J[x_m]
            & -\Im_\K[x_n] & -\Im_\K[x_m]
            \\
            0 & 1 & 0 & 0
            & \Im_\I[x_n] & \Im_\I[x_m]
            & \Re[x_n] & \Re[x_m]
            & -\Im_\K[x_n] & -\Im_\K[x_m]
            & \Im_\J[x_n] & \Im_\J[x_m]           
            \\
            0 & 0 & 1 & 0
            & \Im_\J[x_n] & \Im_\J[x_m]
            & \Im_\K[x_n] & \Im_\K[x_m]
            & \Re[x_n] & \Re[x_m] 
            & -\Im_\I[x_n] & -\Im_\I[x_m]           
            \\
            0 & 0 & 0 & 1
            & \Im_\K[x_n] & \Im_\K[x_m]
            & -\Im_\J[x_n] & -\Im_\J[x_m]
            & \Im_\I[x_n] & \Im_\I[x_m]
            & \Re [x_n] & \Re [x_m] 
            \\ \hline
            % 1st subblock
            \Re[x_n] & \Im_\I[x_n] & \Im_\J[x_n] & \Im_\K[x_n]
            & 1 & \Re[r_{(n,m)}]
            & 0 & \Im_\I[r_{(n,m)}]
            & 0 & \Im_\J[r_{(n,m)}]
            & 0 & \Im_\K[r_{(n,m)}]           
            \\
            \Re[x_m] & \Im_\I[x_m] & \Im_\J[x_m] & \Im_\K[x_m]
            & \Re[r_{(n,m)}] & 1
            & -\Im_\I[r_{(n,m)}] & 0
            & -\Im_\J[r_{(n,m)}] & 0
            & -\Im_\K[r_{(n,m)}] & 0
            \\
            % 2nd subblock
            -\Im_\I[x_n] & \Re[x_n] & \Im_\K[x_n] & -\Im_\J[x_n]
            & 0 & -\Im_\I[r_{(n,m)}]
            & 1 & \Re[r_{(n,m)}]
            & 0 & \Im_\K[r_{(n,m)}]
            & 0 & -\Im_\J[r_{(n,m)}]           
            \\
            -\Im_\I[x_m] & \Re[x_m] & \Im_\K[x_m] & -\Im_\J[x_m]
            & \Im_\I[r_{(n,m)}] & 0
            & \Re[r_{(n,m)}] & 1
            & -\Im_\K[r_{(n,m)}] & 0
            & \Im_\J[r_{(n,m)}] & 0
            \\
            % 3rd subblock
            -\Im_\J[x_n] & -\Im_\K[x_n] & \Re[x_n] & \Im_\I[x_n]
            & 0 & -\Im_\J[r_{(n,m)}]
            & 0 & -\Im_\K[r_{(n,m)}]
            & 1 & \Re[r_{(n,m)}]
            & 0 & \Im_\I[r_{(n,m)}]           
            \\
            -\Im_\J[x_m] & -\Im_\K[x_m] & \Re[x_m] & \Im_\I[x_m]
            & \Im_\J[r_{(n,m)}] & 0
            & \Im_\K[r_{(n,m)}] & 0
            & \Re[r_{(n,m)}] & 1
            & -\Im_\I[r_{(n,m)}] & 0
            \\
            % 4th subblock
            -\Im_\K[x_n] & \Im_\J[x_n] & -\Im_\I[x_n] & \Re [x_n]
            & 0 & -\Im_\K[r_{(n,m)}]
            & 0 & \Im_\J[r_{(n,m)}]
            & 0 & -\Im_\I[r_{(n,m)}]
            & 1 & \Re [r_{(n,m)}]
            \\
            -\Im_\K[x_m] & \Im_\J[x_m] & -\Im_\I[x_m] & \Re [x_m] 
            & \Im_\K[r_{(n,m)}] & 0
            & -\Im_\J[r_{(n,m)}] & 0
            & \Im_\I[r_{(n,m)}] & 0
            & \Re [r_{(n,m)}] & 1
          \end{array}$}
      \right]
      $
    \caption{Permuted matrix $\tilde \MP_{(n,m)}$ build from $\MP_{(n,m)}$.}\label{tab:bigmatrix:perm}
  \end{subcaptionblock}}%}
    \quad  
  \rotatebox{90}{%\fbox{
  \begin{subcaptionblock}{0.95\textheight}
    \centering
    $ \left[
        \scalebox{0.6}{$
          \begin{array}{*{3}{cc|}cc}
            % 1st subblock
            1 - \lvert x_n \rvert^2& \Re[r_{(n,m)}] - \Re[\bar x_m x_n]
            & 0 & \Im_\I[r_{(n,m)}] - \Im_\I[\bar x_m x_n]
            & 0 & \Im_\J[r_{(n,m)}] - \Im_\J[\bar x_m x_n]
            & 0 & \Im_\K[r_{(n,m)}] - \Im_\K[\bar x_m x_n]          
            \\
            \Re[r_{(n,m)}]  - \Re[\bar x_m x_n] & 1 - \lvert x_m \rvert^2
            & -\Im_\I[r_{(n,m)}] + \Im_\I[\bar x_m x_n]& 0
            & -\Im_\J[r_{(n,m)}] + \Im_\J[\bar x_m x_n]& 0
            & -\Im_\K[r_{(n,m)}] + \Im_\K[\bar x_m x_n]& 0
            \\ \hline
            % 2nd subblock
            0 & -\Im_\I[r_{(n,m)}] + \Im_\I[\bar x_m x_n]
            & 1 - \lvert x_n \rvert^2 & \Re[r_{(n,m)}]  - \Re[\bar x_m x_n]
            & 0 & \Im_\K[r_{(n,m)}] - \Im_\K[\bar x_m x_n]
            & 0 & -\Im_\J[r_{(n,m)}] + \Im_\J[\bar x_m x_n]          
            \\
            \Im_\I[r_{(n,m)}] - \Im_\I[\bar x_m x_n]& 0
            & \Re[r_{(n,m)}] - \Re[\bar x_m x_n] & 1 - \lvert x_m \rvert^2
            & -\Im_\K[r_{(n,m)}] + \Im_\K[\bar x_m x_n]& 0
            & \Im_\J[r_{(n,m)}] - \Im_\J[\bar x_m x_n]& 0
            \\ \hline
            % 3rd subblock
            0 & -\Im_\J[r_{(n,m)}] + \Im_\J[\bar x_m x_n]
            & 0 & -\Im_\K[r_{(n,m)}] + \Im_\K[\bar x_m x_n]
            & 1 - \lvert x_n \rvert^2 & \Re[r_{(n,m)}]  - \Re[\bar x_m x_n]
            & 0 & \Im_\I[r_{(n,m)}] - \Im_\I[\bar x_m x_n]          
            \\
            \Im_\J[r_{(n,m)}] - \Im_\J[\bar x_m x_n]& 0
            & \Im_\K[r_{(n,m)}] - \Im_\K[\bar x_m x_n]& 0
            & \Re[r_{(n,m)}] - \Re[\bar x_m x_n] & 1 - \lvert x_m \rvert^2
            & -\Im_\I[r_{(n,m)}] + \Im_\I[\bar x_m x_n]& 0
            \\ \hline
            % 4th subblock
            0 & -\Im_\K[r_{(n,m)}] + \Im_\K[\bar x_m x_n]
            & 0 & \Im_\J[r_{(n,m)}] - \Im_\J[\bar x_m x_n]
            & 0 & -\Im_\I[r_{(n,m)}] + \Im_\I[\bar x_m x_n]
            & 1 - \lvert x_n \rvert^2 & \Re [r_{(n,m)}]  - \Re[\bar x_m x_n]
            \\
            \Im_\K[r_{(n,m)}] - \Im_\K[\bar x_m x_n]& 0
            & -\Im_\J[r_{(n,m)}] + \Im_\J[\bar x_m x_n]& 0
            & \Im_\I[r_{(n,m)}] - \Im_\I[\bar x_m x_n]& 0
            & \Re [r_{(n,m)}]  - \Re[\bar x_m x_n] & 1 - \lvert x_m \rvert^2
          \end{array}$}
      \right]
    $
    \caption{Schur complement $\tilde\MP_{(n,m)}/\MI_4$
        containing $\MQ_{(n,m)}/\MI_4$ on the diagonal.}\label{tab:bigmatrix:schur}
  \end{subcaptionblock}}%}
  \caption{Reformulating $P_{(n,m)}$ by writing out its real-valued representation $\MP_{(n,m)}$,
  permuting the columns/rows with respect to the permutation $(1,2,3,4,5,9,6,10,7,11,8,12)$,
  and calculating the Schur complement with respect to the upper left identity matrix.
  }
  \label{tab:bigmatrix}
\end{table}%
Second,
we consider the matrix $\MP_{(n,m)}$
whose details are given in Table~\ref{tab:bigmatrix}a.
Permuting the columns and rows of $\MP_{(n,m)}$
with respect to the permutation $(1,2,3,4,5,9,6,10,7,11,8,12)$,
we obtain $\tilde \MP_{(n,m)}$ in Table~\ref{tab:bigmatrix}b.
Forming the Schur complement with respect to the upper left identity
results in the block matrix $\tilde \MP_{(n,m)} / \MI_4$ in Table~\ref{tab:bigmatrix}c.

Comparing the Schur complements $\MQ_{(n,m)} / \MI_4$ and $\tilde \MP_{(n,m)} / \MI_4$
and using $\Re[\bar x_m x_n] = \langle \Vx_m, \Vx_n \rangle$, 
we obtain the following:		
if $(\hat {x}, \hat {r})$ solves \eqref{eq:conv-quad-tik}, then 
$(\hat{\Vx}, \hat \Vell)$ with $\hat \Vell_{(n,m)} \coloneqq \Re[\hat r_{(n,m)})]$ is a feasible point of \eqref{eq:conv-real-tik}.
Moreover, 
it holds $\FK(\hat{\Vx}, \hat \Vell) = \FJ(\hat {x}, \hat {r})$.
Conversely,	if $(\tilde{\Vx}, \tilde{\Vell})$
solves 
\eqref{eq:conv-real-tik}, then
$(\tilde {x}, \tilde {r})$ with $\tilde {r}$ defined in the assertion
is a feasible point of \eqref{eq:conv-quad-tik}---%
all off-diagonal blocks of $\tilde \MP_{(n,m)} / \MI_4$ become zero---%
and again $\FK(\tilde{\Vx}, \tilde \Vell) = \FJ(\tilde {x}, \tilde {r})$.
Then, taking the minimizing property into account, we conclude  
\begin{linenomath*}
$$
\FK(\tilde{\Vx}, \tilde \Vell) \le  
\FK(\hat{\Vx}, \hat \Vell) 
= 
\FJ(\hat {x}, \hat {r}) \le \FJ(\tilde{x}, \tilde {r}) = \FK(\tilde{\Vx}, \tilde \Vell)
$$
\end{linenomath*}
which is only possible if all values coincides. This yields the assertion.	
\hspace{\fill} $\square$

 \section*{Acknowledgements} 
 Funding by  the DFG excellence cluster MATH+ and by the 
 BMBF project “VI-Screen” (13N15754) 
 are gratefully acknowledged.
 Many thanks to L. Condat for discussions on the topic.

\bibliographystyle{abbrv}
\bibliography{literatur}

\end{document}